\theoremstyle{plain}
\newtheorem{theorem}{Theorem}[section]
\newtheorem{lemma}[theorem]{Lemma}
\newtheorem{proposition}[theorem]{Proposition}
\newtheorem*{theorem*} { }
\newcommand{\LeftEqNo}{\let\veqno\@@leqno}
\numberwithin{equation}  {section}
\begin{document}

\

\vspace{-2cm}

\title{Solvability of $\binom{2k}{k} = \binom{2a}{a} \binom{x+2b}{b}$}

\author{Meaghan Allen}
\address{Meaghan Allen \\
        Department of Mathematics and Statistics \\
         University of New Hampshire\\
         Durham, NH 03824, US; \  Email: Meaghan.Allen@unh.edu   }

\keywords[2020 Mathematics Subject Classification: {11B65, 05A10}

\keywords{binomial coefficient, prime number}

\begin{abstract}
Suppose $k,x,$ and $b$ are positive integers, and $a$ is a nonnegative integer such that $k=a+b$. In this paper, we will prove 
$\binom{2k}{k} = \binom{2a}{a} \binom{x+2b}{b}$ if and only if $x=a=1$. We do this by looking at different cases depending on the values of $x$ and $k$. We use varying techniques to prove the cases, such as direct proof, verification through Maple software, and a proof technique found in Moser's paper. Previous results from Hanson, St\u anic\u a, Shanta, Shorey and Nair are also used.
\end{abstract}


\maketitle

\section{Introduction}
It was discovered by Moser in \cite{Mor} that the equation
$$ \binom{2n}{n} = \binom{2a}{a} \binom{2b}{b}$$
has no solutions. This result was further extended by P. Erdos in \cite{erdos}, where he proved that
$$
\binom{2m}{m} \nmid \binom{2n}{n}
$$ for $2m>n$.
Following the line investigated by Moser in \cite{Mor} and  Erdos in \cite{erdos}, the purpose of this paper is to prove the following result:

\begin{theorem}\label{Thm1.1}
Let $k,x, b$ be positive integers and $a$ be a nonnegative integer such that $k=a+b$. Then
\[ \binom{2k}{k} = \binom{2a}{a} \binom{x+2b}{b}, \qquad \text { if and only if  } x=a=1.\]
\end{theorem}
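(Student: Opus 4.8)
The plan is to prove the two directions separately, with essentially all of the work lying in the ``only if'' direction. For the ``if'' direction I would substitute $x=a=1$, so that $\binom{2a}{a}=2$ and $k=b+1$, and use the identity $\binom{2b+2}{b+1}=2\binom{2b+1}{b}$ (immediate from $\binom{2b+2}{b+1}/\binom{2b+1}{b}=\tfrac{2b+2}{b+1}=2$); this yields $\binom{2k}{k}=2\binom{2b+1}{b}=\binom{2a}{a}\binom{x+2b}{b}$, as required. For the ``only if'' direction, assume the identity holds and rewrite it as
\[
\binom{x+2b}{b}=\frac{\binom{2k}{k}}{\binom{2a}{a}},\qquad k=a+b ,
\]
calling the right-hand side $R(a,b)$. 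Since $\binom{x+2b}{b}$ is strictly increasing in $x\ge 1$, for each pair $(a,b)$ there is at most one candidate value of $x$, so the task is to decide for which $(a,b)$ the number $R(a,b)$ has the form $\binom{x+2b}{b}$.

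I would first dispose of the two smallest values of $a$. If $a=0$ the equation reads $\binom{2k}{k}=\binom{x+2k}{k}$, which is impossible for $x\ge 1$ because the right-hand side strictly exceeds the left. If $a=1$ then $k=b+1$ and a short computation gives $R(1,b)=\tfrac12\binom{2b+2}{b+1}=\binom{2b+1}{b}$; by strict monotonicity in $x$ this forces $x=1$. Thus the theorem reduces to showing that no solution exists with $a\ge 2$.

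For the case $a\ge 2$ I would show there is no solution for \emph{any} $k$. Because the subfamily $x=1$ (where $b$ is unbounded) is infinite, a finite computation cannot suffice, so the core is a Moser-type prime argument \cite{Mor}. By Kummer's theorem the exponent of a prime $p$ in $\binom{m+n}{n}$ equals the number of carries when $m$ and $n$ are added in base $p$; choosing $p$ in a Bertrand interval such as $(k,2k]$ gives $v_p\binom{2k}{k}=1$, and I would track $v_p\binom{2a}{a}$ and $v_p\binom{x+2b}{b}$ to produce a valuation mismatch once $k$ exceeds an explicit threshold $K_0$. To make $K_0$ effective across the different ranges of $x$, I would invoke sharp estimates — St\u anic\u a's bounds for central binomial coefficients, Hanson's inequality $\prod_{p\le n}p<3^{n}$, and the Shorey--Nair estimates. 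The finitely many triples $(a,b,x)$ with $a\ge 2$ and $k\le K_0$ are then checked, by hand in the smallest instances and by Maple for the rest, to contain no solution.

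The step I expect to be the main obstacle is precisely this $a\ge 2$ analysis. In Moser's symmetric equation $\binom{2n}{n}=\binom{2a}{a}\binom{2b}{b}$ a single prime in $(n,2n]$ already forces a contradiction, but here the shift parameter $x$ lets $\binom{x+2b}{b}$ absorb arbitrarily large prime factors, so the chosen prime need not yield an immediate mismatch. Moreover, since $R(a,b)\to 4^{b}$ as $a\to\infty$ for fixed $b$, the quantity $R(a,b)$ can fall extremely close to an admissible value $\binom{x+2b}{b}$, so near-solutions must be excluded arithmetically rather than by magnitude alone. The crux is therefore to combine Kummer-type valuation control at a carefully chosen prime with two-sided bounds on all three binomial coefficients, so as to confine $x$, $b$ and $k$ to an explicit finite range in every case, after which the computation completes the proof.
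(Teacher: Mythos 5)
Your ``if'' direction and your treatment of $a=0$ and $a=1$ are correct (and the $a=1$ reduction via $R(1,b)=\binom{2b+1}{b}$ plus monotonicity in $x$ is a clean way to force $x=1$). But the entire content of the theorem lives in your third case, $a\ge 2$, and there you have written a research plan rather than a proof --- as you yourself concede (``the step I expect to be the main obstacle,'' ``the chosen prime need not yield an immediate mismatch''). The specific mechanism you propose does not work as stated: a Bertrand/Kummer prime $p\in(k,2k]$ with $v_p\binom{2k}{k}=1$ gives no contradiction, because whenever $x+2b\ge p$ (which happens for all $b$ close to $k$, in particular throughout the unbounded subfamily $x=1$, $b\to\infty$ that you correctly identify as the core difficulty) the factor $\binom{x+2b}{b}$ can simply contain $p$. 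No threshold $K_0$ emerges from this argument, so the concluding finite computation has nothing to stand on. Note also that the Hanson result you invoke, $\prod_{p\le n}p<3^n$, is a different theorem from the one that is actually needed here, namely Hanson's Sylvester--Schur-type theorem that a product of $m$ consecutive integers $>m$ has a prime divisor exceeding $\tfrac32 m$ (with three exceptions).

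For comparison, the paper closes exactly this gap by applying prime-divisor theorems not to $\binom{2k}{k}$ itself but to the \emph{asymmetric products of consecutive integers} appearing in the ratio $\binom{2k}{k}/\binom{x+2b}{b}$ or $\binom{2k}{k}/\binom{2a}{a}$: Hanson's $\tfrac32 m$ bound applied to $(2k)(2k-1)\cdots(2b+3)$ shows the resulting prime must divide $\binom{2a}{a}$, forcing $2a>3a-3$, i.e.\ $a<3$, which kills the $x=1$, $a\ge2$ family outright (Proposition \ref{prop2.10March24}); for $x\ge 2$ it combines Laishram--Shorey ($1.8m$) to force $a>0.9k$ or $b>0.8k$ (Lemma \ref{lemma2.13March24}), Nair--Shorey ($4.42m$) applied to the denominator product $k(k-1)\cdots(x+b+1)$ to rule out $a\ge\tfrac{171}{121}x$ (Lemma \ref{lemma2.14March24}), St\u anic\u a's bounds for the remaining narrow band $x<a<\tfrac{171}{121}x$ (Proposition \ref{prop2.15March24}), and elementary arguments for $a-x\le 3$ and small $b$, before the finite Maple check for $k<150$. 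Until you supply an argument of comparable force for $a\ge 2$ --- in particular one that handles $x=1$ with $b$ unbounded --- your proposal is not a proof of the theorem.
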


To prove this result, we need to overcome the difficulty that integers $a$ and $b$ are no longer symmetric in the equation $\binom{2k}{k} = \binom{2a}{a} \binom{x+2b}{b}$, unlike the case discussed in  \cite{Mor} and \cite{erdos}. The key tool used in our proof is an analysis of the existence of prime numbers in the product of consecutive integers, which was extensively investigated in \cite{Shorey1} and \cite{Shorey2}.

\section{Proof of Main Result}
To prove our main result, we will break the proof up into different cases. We will state these cases as propositions, and prove them throughout the paper by proving smaller lemmas. First, we will prove the result when $x\geq a$.

\begin{proposition}\label{prop2.1March24}
Let $k,x,b$ be positive integers and $a$ be a nonnegative integer such that $k=a+b$. Assume that $x\ge a$. Then
$$
\binom{2k}{k}\le \binom{2a}{a}\binom{x+2b}{b}.
$$ Moreover, the equality holds if and only if $x=a=1$.
\end{proposition}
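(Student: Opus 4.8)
The plan is to first reduce to the extremal case $x=a$ by monotonicity, and then prove the resulting inequality by factoring the relevant ratio into a product of elementary factors, each of which turns out to be at least $1$. Since $b\ge 1$, the quantity $\binom{x+2b}{b}$ is strictly increasing in the integer $x$, and hence so is the right-hand side $\binom{2a}{a}\binom{x+2b}{b}$. Thus, over all $x\ge a$, the right-hand side is smallest at $x=a$, so it suffices to establish the boundary inequality
\[
\binom{2a+2b}{a+b}\le \binom{2a}{a}\binom{a+2b}{b},
\]
which I will call $(*)$; for any $x>a$ the right-hand side is strictly larger, so equality in the proposition already forces $x=a$. The degenerate value $a=0$ I would treat separately: there $x\ge 1>0=a$, and $\binom{2b}{b}<\binom{1+2b}{b}\le\binom{x+2b}{b}$ gives strict inequality with no equality, consistent with the claim.

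For $(*)$ (with $a\ge 1$) the key step is to write the ratio of the two sides with factorials and observe that it telescopes. Concretely, I would show
\[
\frac{\binom{2a}{a}\binom{a+2b}{b}}{\binom{2a+2b}{a+b}}
=\frac{(2a)!\,(a+2b)!\,(a+b)!}{(a!)^2\,b!\,(2a+2b)!}
=\prod_{i=1}^{a}\frac{(b+i)(a+i)}{i\,(a+2b+i)},
\]
using the three elementary product expansions $\binom{a+b}{a}=\prod_{i=1}^a\frac{b+i}{i}$, $\tfrac{(2a)!}{a!}=\prod_{i=1}^a(a+i)$, and $\tfrac{(a+2b)!}{(2a+2b)!}=\prod_{i=1}^a\frac{1}{a+2b+i}$.

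The heart of the argument is then the one-line identity
\[
(b+i)(a+i)-i\,(a+2b+i)=b\,(a-i),
\]
so the $i$-th factor is $>1$ when $i<a$, equals $1$ when $i=a$, and would be $<1$ only when $i>a$. Since the product ranges over $1\le i\le a$, every factor is $\ge 1$, which proves $(*)$; and the product equals $1$ exactly when every factor does, i.e. when the index set $\{1,\dots,a\}$ reduces to $\{a\}$, that is $a=1$. Combining with the reduction, the inequality holds, and equality forces both $x=a$ and $a=1$, hence $x=a=1$; conversely $x=a=1$ yields equality because $\binom{2b+2}{b+1}=2\binom{2b+1}{b}$. I do not expect a serious obstacle here: once one spots that the ratio factors over $i=1,\dots,a$, the sign of $b(a-i)$ settles the inequality and the equality case simultaneously, and the remaining manipulations are routine.
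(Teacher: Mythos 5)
Your proof is correct and follows essentially the same route as the paper: both reduce to $x=a$ by monotonicity of $\binom{x+2b}{b}$ in $x$, then factor the ratio $\binom{2a}{a}\binom{a+2b}{b}\big/\binom{2k}{k}$ into the same $a$ rational factors (your $\prod_{i=1}^{a}\frac{(b+i)(a+i)}{i(a+2b+i)}$ is exactly the paper's grouping $\prod_{j}\frac{(2a-j)(k-j)}{(a-j)(2k-j)}$ under $i=a-j$), each shown to be $\ge 1$ with equality forcing $a\le 1$. Your identity $(b+i)(a+i)-i(a+2b+i)=b(a-i)$ merely makes explicit the factor-by-factor comparison that the paper asserts, and your separate handling of $a=0$ is a harmless reorganization.
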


\begin{proof}
Note that
$$\begin{aligned}
\frac { \binom{2a}{a}\binom{a+2b}{b}}{\binom{2k}{k}}& = \frac {(2a)!}{a!\cdot a!}\cdot \frac {(a+2b)!}{b!(a+b)!}\cdot \frac {k!\cdot k!}{(2k)!} \\
  & = \frac {(2a)(2a-1)\cdots(a+1)}{a (a-1)\cdots 1} \cdot \frac {k(k-1)\cdots(b+1)}{(2k)(2k-1)\cdots ( a+2b+1)}\\
  & = 2\cdot \frac {2a-1}{a-1} \frac {2a-2}{a-2}\cdots \frac {a+1}{1} \cdot \frac 1 2 \cdot \frac {k-1}{2k-1}\frac {k-2}{2k-2}\cdots \frac {k-a+1}{2k-a+1}\\
  &= \Big( \frac {2a-1}{a-1} \frac {k-1}{2k-1} \Big) \Big( \frac {2a-2}{a-2}\frac {k-2}{2k-2} \Big) \cdots \Big( \frac {a+1}{1}\frac {k-a+1}{2k-a+1} \Big) \\
  &  \left \{ \begin{aligned}
             & >1 \quad \text{ for } 1 < a <k\\
             & =1 \quad \text{ for } a=0,  1
               \end{aligned}
  \right .
\end{aligned}
$$
Now, if $x>a$, then $$\binom{2a}{a}\binom{x+2b}{b}>  \binom{2a}{a}\binom{a+2b}{b}\ge \binom{2k}{k}.$$
Furthermore, if $x=a>1$, then
$$\binom{2a}{a}\binom{x+2b}{b}= \binom{2a}{a}\binom{a+2b}{b}> \binom{2k}{k}.
$$ When $x=a=1$, we have
$$\binom{2a}{a}\binom{x+2b}{b}= \binom{2a}{a}\binom{a+2b}{b}= \binom{2}{1}\binom{1+2b}{b}= \binom{2k}{k}. $$
\end{proof}

Next, we will prove the result for $1\leq k \leq 10$.

\begin{lemma}\label{lemma2.2March24}
Let $k,x,a, b$ be positive integers such that $k=a+b$. Assume that $1\le k\le 10$. Then
$$
\binom{2k}{k}= \binom{2a}{a}\binom{x+2b}{b}.
$$ if and only if $x=a=1$.
\end{lemma}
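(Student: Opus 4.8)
The plan is to prove Lemma~\ref{lemma2.2March24} by a finite check, since only finitely many tuples $(k,a,b,x)$ are in play. First I would observe that by Proposition~\ref{prop2.1March24}, the case $x\ge a$ is already settled: equality forces $x=a=1$, and I should verify that $x=a=1$ genuinely produces a solution for each $k$ in range (it does, since $\binom{2}{1}\binom{2k-1}{k-1}=2\binom{2k-1}{k-1}=\binom{2k}{k}$). So the real content of the lemma is to rule out solutions with $x<a$ in the bounded regime $1\le k\le 10$.

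For the range $x<a$, the cleanest approach is exhaustive enumeration. Since $k=a+b$ with $a,b\ge 1$, for each fixed $k\in\{2,\dots,10\}$ there are only finitely many pairs $(a,b)$ with $a+b=k$, namely $a$ ranging over $1,\dots,k-1$. For each such $(a,b)$ the quantity $\binom{2k}{k}/\binom{2a}{a}$ is a fixed positive rational, and I would determine whether there exists a positive integer $x<a$ with $\binom{x+2b}{b}=\binom{2k}{k}/\binom{2a}{a}$. Because $\binom{x+2b}{b}$ is strictly increasing in $x$, there is at most one candidate $x$ for each $(a,b)$, so the check amounts to solving a single equation per pair and testing whether the solution is a positive integer strictly less than $a$. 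This is a completely mechanical finite computation.

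Concretely, I would tabulate, for each $k$ from $2$ to $10$ and each admissible $a$ with $2\le a\le k-1$ (the case $a=1$ having no room for $x<a=1$ with $x\ge 1$), the target value $\binom{2k}{k}/\binom{2a}{a}$ and compare it against the increasing sequence $\binom{1+2b}{b},\binom{2+2b}{b},\dots,\binom{(a-1)+2b}{b}$ where $b=k-a$; finding no match in any case completes the argument. This is exactly the sort of bounded verification the abstract flags as being carried out by Maple, and I expect the lemma's proof to invoke such a computation rather than a closed-form argument.

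The main obstacle is not conceptual but organizational: there is no single slick inequality that disposes of all $x<a$ cases simultaneously, because for small $a$ the quotient $\binom{2k}{k}/\binom{2a}{a}$ can come tantalizingly close to a binomial-coefficient value. The safest route is therefore to make the finite table genuinely complete and to state explicitly that the machine check (or hand check) finds no spurious integer solution, rather than attempting a uniform estimate. I would emphasize that the boundedness $1\le k\le 10$ is precisely what makes this enumeration legitimate, and that the complementary unbounded case $x<a$, $k>10$ is deferred to later propositions.
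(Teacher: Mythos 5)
Your proposal matches the paper's proof: the paper likewise invokes Proposition~\ref{prop2.1March24} to reduce to the case $x\le a$ and then disposes of all remaining cases for $1\le k\le 10$ by a direct Maple computation. Your write-up merely adds organizational detail (monotonicity of $\binom{x+2b}{b}$ in $x$, explicit enumeration of pairs $(a,b)$) to the same finite-verification strategy.
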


\begin{proof} By Proposition \ref{prop2.1March24}, we might assume that $x\le a$. All cases for $1\le k\le 10$ and $x\le a$ are verified by a direct computation with Maple.
\end{proof}

In the following three lemmas, we will prove the result for $a-x=1,2,3$ when $4<2a\leq k$. We will fully prove Lemma \ref{lemma2.3March24}, and omit the proofs of Lemma \ref{lemma2.4March24} and Lemma \ref{lemma2.5March24} since they are similar.

\begin{lemma}\label{lemma2.3March24} Let $k,x,a, b$ be positive integers such that $k=a+b$. Then
$$
\binom{2k}{k}\ne \binom{2a}{a}\binom{a+2b-1}{b},\qquad \text { for }   \   4<2a\le  k.
$$
\end{lemma}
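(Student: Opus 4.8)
Since $k=a+b$, the case treated here is $x=a-1$, so that $\binom{x+2b}{b}=\binom{a+2b-1}{b}=\binom{2k-a-1}{k-a}$. The plan is to avoid divisibility entirely and instead argue exactly in the spirit of Proposition \ref{prop2.1March24}, by estimating the ratio $\binom{2a}{a}\binom{a+2b-1}{b}\big/\binom{2k}{k}$; specifically I will prove the strict inequality $\binom{2k}{k}<\binom{2a}{a}\binom{a+2b-1}{b}$ throughout $4<2a\le k$, which gives the asserted $\ne$. Substituting $b=k-a$ and cancelling factorials as in the Proposition, a direct computation collapses the ratio to
\[
\frac{\binom{2a}{a}\binom{a+2b-1}{b}}{\binom{2k}{k}}=\binom{2a-1}{a-1}\cdot\frac{k(k-1)\cdots(k-a+1)}{(2k-1)(2k-2)\cdots(2k-a)},
\]
so that the claim becomes equivalent to the clean inequality
\[
g(k):=\prod_{j=1}^{a}\frac{2k-j}{k-j+1}<\binom{2a-1}{a-1},\qquad 4<2a\le k.
\]

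To establish this I would first show that $g(k)$ is decreasing in $k$ on $k\ge 2a$ (for $a\ge 3$, which is forced by $2a>4$). This follows either by writing $g(k+1)/g(k)$ as a product of factors $\tfrac{(2k+2-j)(k-j+1)}{(2k-j)(k-j+2)}$ and checking each is at most $1$, or by signing the logarithmic derivative $\sum_{j=1}^{a}\frac{2-j}{(2k-j)(k-j+1)}$, whose only positive term ($j=1$) is dominated by the negative terms with $j\ge 3$. Monotonicity reduces everything to the worst case $k=2a$, where it remains to verify the purely arithmetic inequality
\[
g(2a)=\frac{(4a-1)!\,a!}{(3a-1)!\,(2a)!}<\frac{(2a-1)!}{(a-1)!\,a!}=\binom{2a-1}{a-1},\qquad a\ge 3,
\]
which I would prove by induction on $a$ (comparing the ratios of consecutive values of the two sides) or, less delicately, by a Stirling estimate: the left side grows like $(64/27)^{a}$ while $\binom{2a-1}{a-1}$ grows like $4^{a}/(2\sqrt{\pi a})$, and $64/27<4$.

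The main obstacle lies entirely in this last uniform comparison. The monotonicity step is routine once the factors are written out, but the base case at $k=2a$ must hold for every $a\ge 3$ at once, and the margin is narrowest for the smallest admissible $a$ (for $a=3$ one has $g(6)=8.25<10=\binom{5}{2}$), so the induction must be set up with care. I expect no prime-counting input to be needed for $a-x=1$: as $a-x$ grows, the factor $\binom{x+2b}{b}/\binom{a+2b}{b}$ shrinks, the ratio above dips below $1$ for some $k$, and a mere inequality can no longer separate the two sides — which is exactly where the analysis of the prime factors of $(2k-1)(2k-2)\cdots(2k-a)$ from \cite{Shorey1,Shorey2} becomes necessary. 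For $a-x=1$ the two sides never meet, so the strict inequality closes the argument, and Lemmas \ref{lemma2.4March24} and \ref{lemma2.5March24} follow the identical template with one or two additional dropped factors in the numerator of the reduced ratio.
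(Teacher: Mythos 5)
Your plan is sound for this lemma, and it is a genuinely different route from the paper's. The paper also proves the strict inequality $\binom{2k}{k}<\binom{2a}{a}\binom{a+2b-1}{b}$, but by pairing factors: it matches $\frac{2k-j}{k-j+1}<\frac{2a-j+1}{a-j+1}$ for $j=1,\dots,a-1$ and then needs the leftover estimate $2\cdot\frac{2k-a}{k-a+1}<6\le a+1$, which forces $a\ge 5$; the cases $a=3,4$ are then settled separately by reducing the hypothetical equality to the quadratics $4(2k-1)(2k-3)=20k(k-2)$ and $8(2k-1)(2k-3)=70k(k-3)$, whose integer roots lie outside $k\ge 2a$. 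Your reduction identity is correct, and your scheme (fix $a$, show $g(k)=\prod_{j=1}^{a}\frac{2k-j}{k-j+1}$ decreases in $k$, then verify the single worst case $g(2a)<\binom{2a-1}{a-1}$) buys uniformity over all $a\ge 3$, eliminating the small-$a$ case analysis entirely. What it costs is that the worst-case inequality must be established for \emph{every} $a\ge 3$: the asymptotic comparison ``$(64/27)^a$ versus $4^a/(2\sqrt{\pi a})$'' is not by itself a proof, so you must actually run the induction (its step reduces to a polynomial inequality valid for $a\ge 3$) or invoke explicit Stirling-type constants such as the paper's Lemma \ref{lemma2.7March24}, where the margin is tight precisely at $a=3$.

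Two caveats. First, your ``either/or'' for monotonicity: the first alternative fails, since the $j=1$ factor of $g(k+1)/g(k)$ equals $\frac{(2k+1)k}{(2k-1)(k+1)}=\frac{2k^2+k}{2k^2+k-1}>1$, so the factors are not all at most $1$; only your second, logarithmic-derivative argument is valid (and there your observation is exactly right: the positive $j=1$ term $\frac{1}{(2k-1)k}$ is dominated by the $j=3$ term, because $(2k-3)(k-2)<(2k-1)k$). Second, the closing claim that Lemmas \ref{lemma2.4March24} and \ref{lemma2.5March24} follow ``the identical template'' is too optimistic. Already for $a-x=2$ the direction of the inequality flips with $a$ (the ratio tends to $\binom{2a}{a}/2^{a+2}$, which is $<1$ for $a=3$ and $>1$ for $a\ge4$), and for $a-x=3$ with $a=5$ the ratio is $199584/184756>1$ at $k=10$ yet tends to $252/256<1$ as $k\to\infty$, so it crosses $1$ and no one-sided inequality can separate the two sides; that case genuinely requires the equation-solving (or prime-divisor) step. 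None of this affects the lemma at hand, for which your argument, once the worst-case induction is written out, is complete.
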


\begin{proof}

Let $k,x,a, b$ be positive integers such that $k=a+b$. Assume that 
\begin{equation}
\binom{2k}{k}=\binom{2a}{a}\binom{a+2b-1}{b} \ \  \text{ for }   4<2a\le  k. \label{equ2.1March24}
\end{equation}
So,
$$
\frac {(2k)!}{(k!)^2}= \frac {(2a)!}{(a!)^2}\cdot \frac {(2k-a-1)!}{(k-a)!(k-1)!}.
$$Thus,
$$
\frac {2k (2k-1)(2k-2)\cdots (2k-a)}{k\cdot k \cdot (k-1) \cdots (k-a+1)}= \frac {(2a)(2a-1)\cdots (a+2)(a+1)}{a (a-1) \cdots 2\cdot 1}.
$$
We have, for $k>a$,
$$
\frac {2k-1} k < 2, \ \frac {2k-2}{k-1}< \frac {2a-1}{a-1}, \ \frac {2k-3}{k-2}< \frac {2a-2}{a-2}, \cdots, \frac{2k-a+1}{k-a+2}< \frac {a+2} 2
$$
and
$$
2 \cdot \frac{2k-a}{k-a+1}=2\Big( 2+ \frac {a-2}{k-a+1} \Big) <6\le  a+1 \qquad \text { for } a\ge 5.
$$ Thus, if $a\ge 5$, then
$$
\binom{2k}{k}<\binom{2a}{a}\binom{a+2b-1}{b} \qquad \text { for } k=a+b.
$$

When $a=4$, the assumption  (\ref{equ2.1March24})  becomes
$$
\binom{2k}{k}=\binom{8}{4}\binom{2k-5}{k-4},\qquad \text { for }    k\ge 8,
$$ or
$$
\frac {(2k)(2k-1)(2k-2)(2k-3)(2k-4) }{k\cdot k (k-1)(k-2)(k-3) }= \binom{8}{4}.
$$A simple computation shows that
$$
8(2k-1)(2k-3)=70k(k-3),
$$ which has a unique integer solution $k=4$. Thus
$$
\binom{2k}{k}\ne \binom{8}{4}\binom{2k-5}{k-4},\qquad \text { for }    k\ge 8.
$$

When $a=3$, the assumption   (\ref{equ2.1March24})  becomes
$$
\binom{2k}{k}=\binom{6}{3}\binom{2k-4}{k-3},\qquad \text { for }    k\ge 6.
$$ Equivalently, we have, $$
\frac {(2k)(2k-1)(2k-2)(2k-3)  }{k\cdot k (k-1)(k-2)  }=\binom{6}{3}.
$$ It induces that
$$
4(2k-1)(2k-3)=20k(k-2),
$$ where $k=-1, 3$ are integer solutions. Thus,
$$
\binom{2k}{k}\ne \binom{6}{3}\binom{2k-4}{k-3},\qquad \text { for }    k\ge 6.
$$
The proof is complete.
\end{proof}

\begin{lemma}\label{lemma2.4March24} 
Let $k,x,a, b$ be positive integers such that $k=a+b$. Then
$$
\binom{2k}{k}\ne \binom{2a}{a}\binom{a+2b-2}{b},\qquad \text { for }     4<2a\le  k.
$$
\end{lemma}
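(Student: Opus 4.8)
The plan is to mirror the argument of Lemma \ref{lemma2.3March24}, adjusted for the shifted binomial $\binom{a+2b-2}{b}=\binom{2k-a-2}{k-a}$. First I would assume that $\binom{2k}{k}=\binom{2a}{a}\binom{2k-a-2}{k-a}$ holds for some admissible $k,a,b$ with $4<2a\le k$, and cancel factorials to rewrite it as the single identity
$$\frac{(2k)(2k-1)(2k-2)(2k-3)\cdots(2k-a-1)}{k\cdot k\,(k-1)(k-1)(k-2)\cdots(k-a+1)}=\binom{2a}{a}=\frac{(2a)(2a-1)\cdots(a+1)}{a!}.$$
The left-hand side is now a product of $a+2$ ratios, two more than appeared in Lemma \ref{lemma2.3March24}, reflecting the extra numerator factor $(2k-a-1)$ and the repeated denominator factor $(k-1)$.

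The core of the proof is to show, for $a$ large and $k\ge 2a$, that this left-hand side is strictly smaller than $\binom{2a}{a}$, so equality is impossible. Writing $\binom{2a}{a}=\tfrac{2a}{a}\cdot\tfrac{2a-1}{a-1}\cdots\tfrac{a+1}{1}$, I would pair each middle ratio $\frac{2k-j}{k-j+2}$ (for $4\le j\le a+1$) against the factor $\frac{2a-j+4}{a-j+4}$ of $\binom{2a}{a}$; since both equal $2+\frac{j-4}{\,\cdot\,}$ and $k\ge 2a\ge a+2$, this pairing gives $\frac{2k-j}{k-j+2}\le\frac{2a-j+4}{a-j+4}$. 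This consumes the factors $\frac{2a}{a},\frac{2a-1}{a-1},\ldots,\frac{a+3}{3}$ and leaves the four leading ratios $\frac{2k}{k}\cdot\frac{2k-1}{k}\cdot\frac{2k-2}{k-1}\cdot\frac{2k-3}{k-1}$ to be compared with the two surviving factors $\frac{a+2}{2}$ and $a+1$. Since $\frac{2k}{k}=\frac{2k-2}{k-1}=2$ while $\frac{2k-1}{k},\frac{2k-3}{k-1}<2$, the product of these four ratios is strictly less than $16$, whereas $\frac{a+2}{2}\cdot(a+1)\ge 21$ for $a\ge 5$; hence the left-hand side is strictly below $\binom{2a}{a}$ and the equation has no solution when $a\ge 5$.

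It then remains to treat the boundary values $a=3$ and $a=4$ allowed by $4<2a\le k$. For each I would clear denominators in the displayed identity to obtain a single polynomial equation in $k$: for $a=3$ this reduces to $2(2k-1)(2k-3)=5k(k-1)$, and for $a=4$ to $8(2k-1)(2k-3)(2k-5)=70\,k(k-1)(k-3)$. Solving each and checking that no integer root satisfies $k\ge 2a$ completes these cases, exactly as in the $a=3,4$ subcases of Lemma \ref{lemma2.3March24}.

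The main obstacle is arranging the pairing so that the threshold stays at $a\ge 5$. The two extra near-$2$ ratios inherent in this shifted problem tend to inflate the residual, and a naive bookkeeping that bundles a spare factor of $2$ into a single tail ratio (as in Lemma \ref{lemma2.3March24}) would force a larger threshold and hence many more hand computations. The saving is to absorb both spare $2$'s simultaneously against the product $\frac{a+2}{2}\cdot(a+1)$ of the two largest factors of $\binom{2a}{a}$, which keeps the leftover cases down to just $a=3,4$.
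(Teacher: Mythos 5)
Your proposal is correct and takes essentially the same approach as the paper: the paper omits this proof as being similar to Lemma \ref{lemma2.3March24}, and your argument is exactly that adaptation, with the pairing $\frac{2k-j}{k-j+2}\le\frac{2a-j+4}{a-j+4}$ disposing of $a\ge 5$ and the polynomial equations $2(2k-1)(2k-3)=5k(k-1)$ and $8(2k-1)(2k-3)(2k-5)=70k(k-1)(k-3)$ (whose only integer roots, $k=3$ and $k=4$ respectively, violate $k\ge 2a$) settling $a=3,4$. The only blemish is cosmetic: the left-hand side is a product of $a+2$ ratios, one more---not two more---than in Lemma \ref{lemma2.3March24}, which affects nothing in the argument.
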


\begin{lemma}\label{lemma2.5March24} 
Let $k,x,a, b$ be positive integers such that $k=a+b$. Then
$$
\binom{2k}{k}\ne \binom{2a}{a}\binom{a+2b-3}{b},\qquad \text { for }    \   4<2a\le  k.
$$
\end{lemma}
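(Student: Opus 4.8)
The plan is to follow the proof of Lemma~\ref{lemma2.3March24} essentially verbatim, now with $x=a-3$. Positivity of $x$ forces $a\ge 4$, so together with $4<2a\le k$ the range to treat is $a\ge 4,\ k\ge 2a$. Assume, for contradiction, that $\binom{2k}{k}=\binom{2a}{a}\binom{a+2b-3}{b}$ in this range. Using $k=a+b$ one has $a+2b-3=2k-a-3$ and $a+b-3=k-3$, so that
\[
\binom{a+2b-3}{b}=\frac{(2k-a-3)!}{(k-a)!\,(k-3)!},
\]
and hence
\[
\frac{\binom{2k}{k}}{\binom{a+2b-3}{b}}
=\frac{(2k)(2k-1)\cdots(2k-a-2)}{\big[k(k-1)\cdots(k-a+1)\big]\big[k(k-1)(k-2)\big]}=:L .
\]
The assumed equality is exactly $L=\binom{2a}{a}=:R$, so it suffices to prove $L\ne R$.

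For the main range of large $a$ I would show $L<R$, contradicting $L=R$. I would split $L=A\cdot B$ with
\[
A=\prod_{i=0}^{a-1}\frac{2k-i}{k-i},\qquad
B=\frac{(2k-a)(2k-a-1)(2k-a-2)}{k(k-1)(k-2)}.
\]
Since $R=\binom{2a}{a}=\prod_{i=0}^{a-1}\frac{2a-i}{a-i}$ and $\frac{2k-i}{k-i}=2+\frac{i}{k-i}<2+\frac{i}{a-i}=\frac{2a-i}{a-i}$ for $1\le i\le a-1$ (using $k>a$), a term-by-term comparison gives $A<R$. It then remains to absorb the three surplus factors gathered in the tail $B$: each factor of $B$ is less than $2$, so $B<8$, while the three largest-index factors of $R/A$ tend to $\frac{a+1}{2},\frac{a+2}{4},\frac{a+3}{6}$, whose product exceeds $8$ as soon as $(a+1)(a+2)(a+3)>384$, i.e. $a\ge 6$. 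Combining $A<R$ with $B<R/A$ yields $L<R$ for all $a\ge a_0$ with $a_0$ an explicit small constant (and $a_0=6$ should suffice).

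For the finitely many remaining values $4\le a<a_0$ I would argue as in the cases $a=3,4$ of Lemma~\ref{lemma2.3March24}. Here $R=\binom{2a}{a}$ is a fixed integer, and cancelling each even numerator $2k-2m=2(k-m)$ against the factor $k-m$ of the first denominator product collapses $L=R$ to a low-degree polynomial equation in $k$; for instance when $a=4$ it reduces to the cubic $16(2k-1)(2k-3)(2k-5)=70\,k(k-1)(k-2)$, and when $a=5$ to an analogous quartic. Each such equation has only finitely many roots, which I would list and check to confirm that none is an integer with $k\ge 2a$, completing the proof.

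The main obstacle is the large-$a$ step. In Lemma~\ref{lemma2.3March24} there was a single surplus factor, absorbed by the one term $\frac{a+1}{1}$ of $R$; here the tail $B$ contributes three surplus factors, so the slack of the inequality $A<R$ must be apportioned among several of the large factors of $R$. This both raises the threshold $a_0$ above the value $5$ used for Lemma~\ref{lemma2.3March24} and enlarges the set of low-degree polynomial identities to be solved in the small-$a$ step. The delicate point is uniformity: one must verify that the supremum of $L$ over the whole range $k\ge 2a$ stays below $R$ for every $a\ge a_0$, since $A$ is decreasing and $B$ is increasing in $k$, so the worst case need not occur at the endpoint $k=2a$.
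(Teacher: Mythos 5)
Your skeleton (a term-by-term comparison giving $L<R$ for all large $a$, plus explicit polynomial checks for the remaining small $a$) is the same template the paper uses for Lemma~\ref{lemma2.3March24}, which is what the paper means when it omits this proof as ``similar.'' However, your central large-$a$ step has a genuine gap, and it sits exactly at the point you flag as delicate and then leave unresolved. Your threshold $a_0=6$ is obtained by replacing each of the three factors
$$
\frac{(2a-i)\,(k-i)}{(a-i)\,(2k-i)},\qquad i=a-1,\,a-2,\,a-3,
$$
of $R/A$ by its limit as $k\to\infty$. But each such factor is \emph{increasing} in $k$, so those limits are upper bounds, not lower bounds, and cannot be used to certify the inequality for finite $k$. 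Concretely, at the smallest admissible point $k=2a$ with $a=6$ the three factors are $\frac{49}{19}$, $\frac{64}{40}$, $\frac{81}{63}$, whose product is about $5.31<8$; for $a=7$, $k=14$ the product is about $7.11<8$. So the chain ``$B<8<$ (three factors of $R/A$)'' is simply false for $a=6,7$, even though $(a+1)(a+2)(a+3)>384$ there; and since $B$ and these factors are all increasing in $k$, checking the two endpoints $k=2a$ and $k\to\infty$ does not settle the intermediate range either. The conclusion $L<R$ happens to be true in these cases, but your argument does not prove it.

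The gap is repairable without changing your structure. Since each factor of $R/A$ is increasing in $k$, its minimum over $k\ge 2a$ is attained at $k=2a$, where the three-factor product equals $\frac{(a+1)(a+2)^2(a+3)^2}{18(3a+1)(3a+2)}$; this exceeds $8$ precisely from $a=8$ on (it is about $9.31$ at $a=8$ and increasing in $a$), so taking $a_0=8$ makes your absorption argument uniform in $k$, at the cost of adding $a=6,7$ to the small-case list. Alternatively, you could imitate the paper's pairing in Lemma~\ref{lemma2.3March24}, which is shifted: compare $\frac{2k-i-3}{k-i}=2+\frac{i-3}{k-i}<2+\frac{i}{a-i}=\frac{2a-i}{a-i}$ for $0\le i\le a-4$, so that the unused right-hand factors are the three largest, $\frac{(a+1)(a+2)(a+3)}{6}$, and the leftover left-hand factors admit bounds valid for all $k\ge 2a$ with no limiting argument at all. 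Finally, note that your small-$a$ step is only a plan: the polynomial identities must actually be solved. For instance your cubic for $a=4$, $16(2k-1)(2k-3)(2k-5)=70\,k(k-1)(k-2)$, has the integer root $k=4$, and one must observe that it violates $k\ge 2a=8$ (the other roots are irrational); the analogous quartic for $a=5$, and the equations for $a=6,7$ under the corrected threshold, require the same treatment.
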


Next, we will prove the main result in the case that $a\geq \frac{1}{2}k$ and $b\geq 3$. To do so, we will need the following results from Hanson \cite{Hans} and St\u anic\u a \cite{Stan}.

\begin{lemma}[Hanson \cite{Hans}] \label{lemma2.6March24}
The product of $m$ consecutive integers $n(n+1)\cdots(n+m-1)$ greater than $m$ contains a prime divisor greater that $\frac 3 2 m$ with the exceptions $3\cdot 4$, $8\cdot 9$, and $6\cdot 7\cdot 8\cdot 9 \cdot 10$.
\end{lemma}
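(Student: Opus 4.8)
The plan is to argue by contradiction, reducing the statement to one about the smoothness of a binomial coefficient. Writing $N=n+m-1$, the product equals $\dfrac{N!}{(n-1)!}=m!\binom{N}{m}$, and the hypothesis that every factor exceeds $m$ means $n\ge m+1$, hence $N\ge 2m$. Since every prime dividing $m!$ is at most $m<\frac{3}{2}m$, the product $n(n+1)\cdots(n+m-1)$ has a prime divisor exceeding $\frac{3}{2}m$ if and only if $\binom{N}{m}$ does. So I would fix $m$ and $N\ge 2m$, assume that every prime factor of $\binom{N}{m}$ is at most $\frac{3}{2}m$, and aim for a contradiction outside the exceptional list.

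The engine of the argument is the classical valuation estimate $p^{\,v_p\binom{N}{m}}\le N$, valid for every prime $p$, which lets me bound $\binom{N}{m}$ from above by a product over the finitely many admissible primes. Refining this, since a prime $p>\sqrt{N}$ can occur only to the first power, I would split
\[
\binom{N}{m}\ \le\ N^{\,\pi(\sqrt{N})}\prod_{\sqrt{N}<p\le \frac{3}{2}m}p\ \le\ N^{\,\pi(\sqrt{N})}\prod_{p\le \frac{3}{2}m}p .
\]
Against this I would set the lower bound $\binom{N}{m}\ge\binom{2m}{m}$, obtained from the monotonicity of $\binom{N}{m}$ in $N$. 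Feeding in explicit Chebyshev--Mertens estimates for $\pi(x)$ and for $\prod_{p\le x}p$, together with $\binom{2m}{m}\gg 4^{m}/\sqrt{m}$, the upper bound is forced below the lower bound once $m$ is large and $N$ is bounded away from $2m$; this disposes of the bulk of the range.

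The hard part is the boundary regime $N\approx 2m$ (equivalently $n$ only slightly larger than $m$), because the constant $\frac{3}{2}$ is close to best possible and the soft $4^{x}$-type bounds are too lossy there. In that regime I would instead exhibit a large prime directly: a sharp refinement of Bertrand's postulate (for instance Nagura's theorem, giving a prime in $(x,\tfrac{6}{5}x]$ for $x\ge 25$) produces a prime $p$ with $\frac{3}{2}m<p\le N$ that one checks divides $\binom{N}{m}$, since for $m<p\le N<2p$ one has $v_p\binom{N}{m}=1$. Finitely many small values of $m$ fall outside the range where these estimates bite, and I would settle them by direct computation; it is precisely this finite check that should surface the three exceptional products $3\cdot 4$, $8\cdot 9$, and $6\cdot 7\cdot 8\cdot 9\cdot 10$. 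I expect the main obstacle to be exactly this sharpness: upgrading the Sylvester--Erd\H{o}s constant $1$ to $\frac{3}{2}$ while keeping the estimates tight enough near $N=2m$, and carrying out the small-case analysis carefully enough to capture the exceptions and nothing more.
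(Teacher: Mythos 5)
First, a point of comparison: the paper does not prove this lemma at all --- it is imported as a black box from Hanson \cite{Hans} --- so your proposal must stand or fall on its own as a proof of Hanson's theorem, and it falls: its central counting step is quantitatively impossible. You assume every prime factor of $\binom{N}{m}$ is at most $\frac{3}{2}m$, bound $\binom{N}{m}$ above by $N^{\pi(\sqrt{N})}\prod_{p\le \frac{3}{2}m}p$, and claim this is eventually smaller than the lower bound $\binom{2m}{m}\gg 4^{m}/\sqrt{m}$. But
\[
\prod_{p\le \frac{3}{2}m}p \;=\; e^{\theta(3m/2)} \;=\; e^{(3/2+o(1))m}\;\approx\;(4.48\ldots)^{m},
\]
which \emph{exceeds} $4^{m}$ for every large $m$; no Chebyshev--Mertens input, however sharp, can reverse this, since by the prime number theorem the exponent $3/2$ in $e^{3m/2}$ is the true order. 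The Erd\H{o}s-style comparison you are adapting yields a contradiction only when the threshold constant $c$ satisfies $e^{c}<4$, i.e.\ $c<\log 4\approx 1.386$; this is precisely why raising the Sylvester--Schur constant $1$ to Hanson's $\frac{3}{2}$ is hard, and why Hanson's actual proof needs genuinely finer structural input (control of \emph{which} primes can divide $\binom{N}{m}$ via Legendre/Kummer digit considerations, sharper primorial estimates, and a long case analysis on the size of $n$ relative to $m$) rather than a generic size comparison. Your sketch identifies no such savings, so the ``bulk of the range'' is not disposed of at all --- not even the single case $N=2m$.

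There are secondary failures as well. (i) For $N$ large compared with $m$ the stated upper bound is useless in the other direction: $N^{\pi(\sqrt{N})}=e^{\Theta(\sqrt{N})}$ grows without bound in $N$, while your stated lower bound $\binom{2m}{m}$ does not grow at all; you would need to cap the exponent at $\pi\bigl(\min(\sqrt{N},\tfrac{3}{2}m)\bigr)$ and use a lower bound increasing in $N$, such as $(N/m)^{m}$. (ii) Your divisibility criterion is wrong as stated: for a prime $p$ with $m<p\le N<2p$ one has $v_{p}\binom{N}{m}=1$ only if additionally $p>N-m$ (e.g.\ $p=51$, $N=100$, $m=10$ gives $v_{p}=0$); what is really needed is a prime in the interval $(N-m,\,N]$, of relative length about $m/(N-m)$, and Nagura's ratio $\frac{6}{5}$ supplies one only when $N\le 6m$. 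Consequently, between the Nagura window $N\le 6m$ and wherever a corrected counting bound takes over, there remains for each $m$ an \emph{unbounded-in-size} family of values of $N$ that neither argument touches --- so the leftover is not ``finitely many small values of $m$'' to be checked by computation, and the three exceptional products cannot be expected to emerge from the finite check you describe.
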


\begin{lemma}[St\u anic\u a \cite{Stan}]\label{lemma2.7March24}
Let $m,n, r$ be positive integers, with $m>r\ge 1$ and $n\ge 1$. Then
$$
\frac 1 {\sqrt{2\pi}} e^{-\frac 1 {8n}} n^{-\frac 1 2}\frac {m^{mn+\frac 1 2}}{(m-r)^{(m-r)n+\frac 1 2}r^{rn+\frac 1 2}} <\binom{mn}{rn} <
   \frac 1 {\sqrt{2\pi}}   n^{-\frac 1 2}\frac {m^{mn+\frac 1 2}}{(m-r)^{(m-r)n+\frac 1 2}r^{rn+\frac 1 2}}
$$
\end{lemma}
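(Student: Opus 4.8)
The plan is to derive both inequalities from a single application of Stirling's formula with explicit error control. Write $\binom{mn}{rn}=\dfrac{(mn)!}{(rn)!\,((m-r)n)!}$ and substitute the Stirling representation $N!=\sqrt{2\pi N}\,(N/e)^{N}e^{\theta_N}$, where $\theta_N$ denotes the Stirling remainder. First I would carry out the bookkeeping showing that the main factor collapses exactly to the claimed expression: the powers of $e$ cancel because $mn=rn+(m-r)n$, the extraneous powers of $n$ cancel for the same reason, and the three square-root factors combine as $\frac{\sqrt{2\pi}\,\sqrt{m}\,\sqrt{n}}{2\pi\,\sqrt{r}\,\sqrt{m-r}\,n}=\frac{1}{\sqrt{2\pi}}n^{-1/2}\frac{m^{1/2}}{(m-r)^{1/2}r^{1/2}}$, which merges with the surviving power factor $\frac{m^{mn}}{(m-r)^{(m-r)n}r^{rn}}$ to produce $\frac{1}{\sqrt{2\pi}}n^{-1/2}\frac{m^{mn+1/2}}{(m-r)^{(m-r)n+1/2}r^{rn+1/2}}$. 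After this reduction both claimed inequalities become statements about the single error exponent $E:=\theta_{mn}-\theta_{rn}-\theta_{(m-r)n}$: the upper bound is equivalent to $E<0$ and the lower bound to $E>-\frac{1}{8n}$.

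For the error control I would invoke Robbins' sharpening of Stirling, namely $\frac{1}{12N+1}<\theta_N<\frac{1}{12N}$. The upper bound is then immediate: since $m>r\ge 1$ and $(m-r)n\ge 1$ we have $12rn+1<12mn$, so $\theta_{mn}<\frac{1}{12mn}<\frac{1}{12rn+1}<\theta_{rn}\le \theta_{rn}+\theta_{(m-r)n}$, which gives $E<0$ for every admissible $m,r,n$.

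The lower bound is the crux. Applying Robbins in the opposite direction reduces $E>-\frac{1}{8n}$ to the elementary inequality $\frac{1}{12rn}+\frac{1}{12(m-r)n}-\frac{1}{12mn+1}<\frac{1}{8n}$. When $m\ge 3$ this holds comfortably: for $1\le r\le m-1$ one has $\frac{1}{r}+\frac{1}{m-r}\le \frac{3}{2}$ (the maximum $\tfrac32$ occurring only at $m=3$), whence $\frac{1}{12rn}+\frac{1}{12(m-r)n}\le \frac{1}{8n}$, and the subtracted positive term makes the inequality strict.

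I expect the main obstacle to be the single remaining case $m=2$, $r=m-r=1$, i.e.\ the central binomial coefficient $\binom{2n}{n}$, where the constant $\frac{1}{8n}$ is essentially sharp and the first-order Robbins bound is a hair too weak (it already fails numerically at $n=1$, since $\frac{1}{12}+\frac{1}{12}-\frac{1}{25}>\frac18$, even though the true $E$ does satisfy the claim). Here I would replace Robbins by the third-order envelope estimate $\frac{1}{12N}-\frac{1}{360N^3}<\theta_N<\frac{1}{12N}-\frac{1}{360N^3}+\frac{1}{1260N^5}$, which is valid because the Binet series for $\theta_N$ is enveloping with this sign pattern. Combining the upper estimate for $2\theta_n$ with the lower estimate for $\theta_{2n}$ gives $2\theta_n-\theta_{2n}<\frac{1}{8n}-\frac{1}{192n^3}+\frac{1}{630n^5}$, and the one-line check that $\frac{1}{630n^5}\le \frac{1}{192n^3}$ for all $n\ge 1$ yields $2\theta_n-\theta_{2n}<\frac{1}{8n}$, i.e.\ $E>-\frac{1}{8n}$, completing this last case.
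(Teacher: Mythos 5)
Your proof is correct, but there is nothing in the paper to compare it against: this lemma is imported verbatim from St\u anic\u a \cite{Stan}, and the paper never proves it --- it is used as a black-box citation. So you have supplied a proof where the authors supply only a reference. Checking your argument: the reduction is right --- writing each factorial as $\sqrt{2\pi N}\,(N/e)^N e^{\theta_N}$ collapses $\binom{mn}{rn}$ to the claimed main term times $e^{E}$ with $E=\theta_{mn}-\theta_{rn}-\theta_{(m-r)n}$, so the two inequalities become exactly $E<0$ and $E>-\tfrac{1}{8n}$. The Robbins bounds $\tfrac{1}{12N+1}<\theta_N<\tfrac{1}{12N}$ do give $E<0$ in all cases (since $12rn+1<12mn$), and give $E>-\tfrac{1}{8n}$ for $m\ge 3$ via $\tfrac1r+\tfrac1{m-r}\le\tfrac32$. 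Your identification of $m=2$, $r=1$ as the genuinely critical case is the key observation: there the constant $\tfrac{1}{8n}$ is nearly sharp, since $E=-\tfrac{1}{8n}+\tfrac{1}{192n^3}+O(n^{-5})$, and first-order Robbins provably fails, as your $n=1$ computation shows. Your repair --- the third-order enveloping bounds $\tfrac{1}{12N}-\tfrac{1}{360N^3}<\theta_N<\tfrac{1}{12N}-\tfrac{1}{360N^3}+\tfrac{1}{1260N^5}$, which are classical (the Binet--Stirling series for $\ln N!$ envelopes its sum for real positive argument) --- yields $2\theta_n-\theta_{2n}<\tfrac{1}{8n}-\tfrac{1}{192n^3}+\tfrac{1}{630n^5}<\tfrac{1}{8n}$ for all $n\ge 1$, and the arithmetic checks out. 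The one thing you should add is a citation or short justification for the enveloping property, since it is the single nonelementary ingredient. What your route buys is self-containedness: the paper's citation keeps its exposition short, while your argument makes Lemma \ref{lemma2.7March24} independent of \cite{Stan}, at the modest cost of invoking higher-order Stirling remainder estimates.
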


Using the previous two results, we can prove the following Lemma.
\begin{lemma}\label{lemma2.8March24} Let $k,x,a, b$ be positive integers such that $k=a+b$.
If $a\ge \frac 1 2 k$, $b\ge 3$, then $$
\binom{2k}{k}\ne  \binom{2a}{a}\binom{x+2b}{b}.
$$
\end{lemma}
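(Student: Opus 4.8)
The plan is to argue by contradiction through a $p$-adic (prime valuation) comparison of the two sides, exploiting a ``forbidden zone'' phenomenon for the central binomial coefficient. First I would shrink the range of $x$: since $a \ge \frac12 k$ is equivalent to $a \ge b$, the hypothesis $b \ge 3$ forces $a \ge 3 > 1$, so Proposition \ref{prop2.1March24} already rules out every $x \ge a$ (equality there would require $x = a = 1$). Thus it suffices to treat $1 \le x \le a-1$. After writing the equation with factorials, I record the elementary fact, via Legendre's formula, that for any prime $p$ with $\frac{2k}{3} < p \le k$ one has $v_p\!\binom{2k}{k} = 0$; call $(\frac{2k}{3}, k]$ the forbidden zone. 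Because exponents in a binomial coefficient are nonnegative, exhibiting a single prime in the forbidden zone that divides the right-hand side $\binom{2a}{a}\binom{x+2b}{b}$ already contradicts the equality.

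Next I would split on the size of $a$ relative to $b$. When $a \le 2b$ (equivalently $a \le \frac{2k}{3}$) the forbidden zone $(\frac{2k}{3}, k]$ is contained in $(a, 2a]$, since $\frac{2k}{3}\ge a$ and $k \le 2a$; hence every prime in the forbidden zone automatically divides $\binom{2a}{a}$. Invoking a prime-existence result in short intervals (of the Nair/Shorey type) to produce such a prime for $k > 10$ closes this case at once, the range $k \le 10$ being already settled by Lemma \ref{lemma2.2March24}. When $a > 2b$ the forbidden zone dips below $a$, so I would instead apply Hanson's Lemma \ref{lemma2.6March24} to the block of $b \ge 3$ consecutive integers $(x+b+1)\cdots(x+2b)$ making up the numerator of $\binom{x+2b}{b}$ (each factor exceeds $b$, and the constraint $x \ge 1$ removes the exceptional products $3\cdot 4$, $8\cdot 9$, $6\cdots 10$). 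This yields a prime $p > \frac32 b$ dividing $\binom{x+2b}{b}$, localized by $\frac32 b < p \le x+2b \le 2k-a-1 < \frac32 k$, which I would then feed into a valuation comparison against $\binom{2k}{k}$ and $\binom{2a}{a}$.

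The main obstacle is precisely the tail of this last case, when the Hanson prime $p$ lands in $(k, 2k]$: there $v_p\!\binom{2k}{k} = 1$, so $p$ does divide the left-hand side and the clean forbidden-zone contradiction evaporates. To handle it I would carry out the full Legendre bookkeeping for $p$ across all three coefficients, and where divisibility alone is inconclusive I would bring in St\u anic\u a's two-sided estimate (Lemma \ref{lemma2.7March24}) to pin the ratio $\binom{2a}{a}\binom{x+2b}{b}/\binom{2k}{k}$ strictly between consecutive integers, forbidding an exact match for all but finitely many $(k,a,b,x)$, and then check the residual finite list directly. Balancing the prime-valuation argument, which is sharp when $b$ is small, against the analytic size estimate, which is sharp when $b$ is large, is the delicate part of the proof.
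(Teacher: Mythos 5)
Your proposal does not establish the lemma: the case $a>2b$ is left genuinely open, and you say so yourself. Extracting a Hanson prime from the numerator $(x+b+1)\cdots(x+2b)$ of $\binom{x+2b}{b}$ only gives $p>\tfrac32 b$, and such a prime can easily land in a range (for instance $p\in(\max(2a,k),\,x+2b]$) where $v_p\binom{2k}{k}=1=v_p\bigl(\binom{2a}{a}\binom{x+2b}{b}\bigr)$, so no valuation mismatch occurs. Your proposed remedies --- ``full Legendre bookkeeping,'' pinning the ratio ``strictly between consecutive integers'' via St\u anic\u a ``for all but finitely many'' tuples, then checking ``the residual finite list'' --- are not arguments: no bookkeeping is carried out, no finiteness is proved, and no list is exhibited. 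The missing idea (and the paper's actual proof, following Moser) is to extract the prime from the \emph{other} side of the identity. Write
$$
\frac{\binom{2k}{k}}{\binom{2a}{a}}=\frac{(2k)(2k-1)\cdots(2a+1)}{\bigl(k(k-1)\cdots(a+1)\bigr)^2}=\binom{x+2b}{b},
$$
and apply Lemma \ref{lemma2.6March24} to the $2b$ consecutive integers $2a+1,\dots,2k$ (each exceeds $2a\ge k>2b$), obtaining a prime $p>\tfrac32(2b)=3b$. Moser's doubling argument (if $p^\alpha$ exactly divides $k-i$, it exactly divides $2(k-i)$ and $p$ divides no other factor $2k-j$, since $p>2b$) shows $p$ cannot divide $k(k-1)\cdots(a+1)$, or else integrality of the displayed ratio fails; hence $p\mid\binom{x+2b}{b}$, forcing $x+2b\ge p>3b$. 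Now Lemma \ref{lemma2.7March24} gives $\binom{x+2b}{b}>\binom{3b}{b}>4^b$ for $b\ge3$, while splitting even and odd factors gives $\binom{2k}{k}/\binom{2a}{a}=2^b\prod_{i=0}^{b-1}\frac{2k-2i-1}{k-i}<4^b$ --- a contradiction, uniformly, with no case split on $a$ versus $2b$ at all. The point you missed is that the prime must come out larger than $3b$ (not merely $\tfrac32 b$), because that is what converts divisibility into the size bound $x>b$ that St\u anic\u a's estimate can exploit.

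There is also a secondary gap in your first case ($a\le 2b$). The forbidden-zone idea is sound: a prime $p\in(\tfrac{2k}{3},k]$ satisfies $v_p\binom{2k}{k}=0$ yet lies in $(a,2a]$, so it divides $\binom{2a}{a}$. But the tool you invoke to produce such a prime cannot do the job here: the Laishram--Shorey and Nair--Shorey results available in this paper (Lemmas \ref{lemma2.11March24} and \ref{lemma2.12March24}) require $n+m\ge 150$, leaving the whole range $10<k<150$ uncovered by your citation of Lemma \ref{lemma2.2March24} for $k\le 10$. This one is repairable with Lemma \ref{lemma2.6March24} applied to the integers in $(\tfrac{2k}{3},k]$: it yields a prime divisor $p>\tfrac32 m>\tfrac{k}{2}$ of that product, and $p>\tfrac k2$ forces $p$ to equal the term it divides, hence $p\in(\tfrac{2k}{3},k]$. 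Even with that repair, however, the unproved case $a>2b$ means the proposal falls short of the lemma.
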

\begin{proof}We follows the strategy used in \cite{Mor}.  Suppose that $a\ge \frac 1 2 k$.
Since $b\ge 3$, we know $k\ge 6$. Assume $\binom{2k}{k}=  \binom{2a}{a}\binom{x+2b}{b}.$ Then
$$
\frac {\binom{2k}{k}}{ \binom{2a}{a}}=\frac {(2k)(2k-1)\cdots (2a+1)}{(k(k-1)\cdots (a+1))^2} = \binom{x+2b}{b}
$$ is an integer. By Lemma \ref{lemma2.6March24}, there exists a prime divisor $p$ of the product $(2k)(2k-1)\cdots (2a+1)$ such that $p>\frac 3 2 (2k-2a)=3b> 2.$ We claim that $p$ is not a divisor of $k(k-1)\cdots (a+1)$. In fact assume that $p$ divides $k-i$ for some $0\le i\le b-1$ and $\alpha$ is the largest positive integer such that $p^\alpha$ divides $k-i$. Then $\alpha$ is also the largest positive integer such that $p^\alpha $ divides $2(k-i)$, and $p$ is not a divisor of other terms $2k-j$, with $j\ne 2i$, in the numerator. In other words, $\alpha$ is the largest positive integer such that $p^\alpha$ divides $(2k)(2k-1)\cdots (2a+1)$. However, $p^{2\alpha}$ divides $(k(k-1)\cdots (a+1))^2$. This contradicts with the assumption that $
\frac {\binom{2k}{k}}{\binom{2a}{a}}$ is an integer. Hence $p$ doesn't divide the denominator  $k(k-1)\cdots (a+1)$.

So, we have found that $p$ divides $\binom{x+2b}{b}$. Since $p$ is prime, we have $x+2b\ge p>3b$. By Lemma \ref{lemma2.7March24}, we obtain, for $b\ge 3$,
$$
\binom{x+2b}{b}>  \binom{3b}{b}\ge \frac 1 {\sqrt{2\pi}} e^{-\frac 1 {8b}} b^{-\frac 1 2}\frac {3^{3b+\frac 1 2}}{2^{2b+\frac 1 2}} = \left ( \sqrt{\frac 3 {4\pi}}\cdot \frac 1{\sqrt b e^{\frac 1 {8b}}}\cdot \Big( \frac {27}{16}\Big)^b\right ) \cdot 4^b> 4^b.
$$
However,
$$
\frac {\binom{2k}{k}}{\binom{2a}{a}}=\frac {(2k)(2k-1)\cdots (2a+1)}{(k(k-1)\cdots (a+1))^2}=2^b\cdot \frac {(2k-1)(2k-3)\cdots (2a+1)}{k(k-1)\cdots (a+1)} < 4^b.
$$This is a contradiction. Thus, for $b\ge 3$,
$$\binom{2k}{k}\ne \binom{2a}{a}\binom{x+2b}{b}.
$$\end{proof}

Using the previous lemma, we can now prove our main result when $a\geq \frac{1}{2}k$ and $x\geq 2$. 

\begin{lemma}\label{lemma2.9March24} Let $k,x,a, b$ be positive integers such that $k=a+b$.
If $a\ge \frac 1 2 k$  and $x\ge 2$, then $$
\binom{2k}{k}\ne \binom{2a}{a}\binom{x+2b}{b}.
$$
\end{lemma}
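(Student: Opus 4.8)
The plan is to reduce to Lemma \ref{lemma2.8March24} and then dispose of the two leftover values of $b$ by hand. First I would record the elementary equivalence $a\ge\frac12 k \iff a\ge b$ (immediate from $k=a+b$), and observe that Lemma \ref{lemma2.8March24} already rules out any solution under $a\ge\frac12 k$ whenever $b\ge 3$, for every $x$, and in particular for $x\ge 2$. Hence it suffices to treat the two remaining cases $b=1$ and $b=2$, each under the hypotheses $a\ge\frac12 k$ and $x\ge 2$. The workhorse will be the one-step recursion $\binom{2(m+1)}{m+1}=\frac{2(2m+1)}{m+1}\binom{2m}{m}$.

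For $b=1$ we have $k=a+1$, so the constraint $a\ge\frac12 k$ holds automatically. The hypothetical identity $\binom{2k}{k}=\binom{2a}{a}\binom{x+2}{1}$ collapses to
$$
x+2=\frac{2(2a+1)}{a+1}=4-\frac{2}{a+1}.
$$
Integrality of the left side forces $a+1\mid 2$, so $a=1$ (recall $a\ge 1$), which yields $x=1$. Since $x=1<2$, there is no admissible solution in this case.

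For $b=2$ we have $k=a+2$, and $a\ge\frac12 k$ forces $a\ge 2$. Applying the recursion twice gives $\binom{2k}{k}/\binom{2a}{a}=\frac{4(2a+1)(2a+3)}{(a+1)(a+2)}$, so the identity $\binom{2k}{k}=\binom{2a}{a}\binom{x+4}{2}$ becomes
$$
(x+4)(x+3)=\frac{8(2a+1)(2a+3)}{(a+1)(a+2)}.
$$
I would then bound the right-hand side: rewriting it as $32-\frac{8(4a+5)}{(a+1)(a+2)}$ shows it is increasing in $a$ and lies strictly between its value at $a=2$, namely $\frac{70}{3}$, and its limit $32$. The only value of $(x+4)(x+3)$ with $x$ a positive integer lying in $\left(\tfrac{70}{3},32\right)$ is $30$, attained at $x=2$. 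Substituting $(x+4)(x+3)=30$ reduces the equation to $a^2-13a-18=0$, whose discriminant $241$ is not a perfect square; hence no integer $a$ works, and the case $b=2$ admits no solution either.

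The argument is essentially bookkeeping once Lemma \ref{lemma2.8March24} is in hand, since that result handles all $b\ge 3$; the only genuine work is the clean algebraic reduction for $b=1,2$. The step I expect to require the most care is the bounding argument for $b=2$: one must confirm both the monotonicity of $\frac{8(2a+1)(2a+3)}{(a+1)(a+2)}$ and its precise range for $a\ge 2$ in order to be certain that $x=2$ is the \emph{only} candidate before arriving at the non-square discriminant.
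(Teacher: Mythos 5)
Your proposal is correct and takes essentially the same route as the paper: both invoke Lemma \ref{lemma2.8March24} to dispose of $b\ge 3$ and then settle the leftover cases $b=1,2$ by elementary algebra, and your final quadratic $a^2-13a-18=0$ is, after the substitution $k=a+2$, exactly the paper's $k^2-17k+12=0$ with the same non-square discriminant $241$. The only cosmetic differences are that for $b=1$ the paper argues by a direct inequality while you use integrality of $4-\frac{2}{a+1}$, and for $b=2$, $x\ge 3$ the paper uses the crude bound $\frac{k(k-1)}{(2k-1)(2k-3)}>\frac14$ where you use monotonicity of the right-hand side; both are sound.
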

\begin{proof}
By Lemma \ref{lemma2.8March24}, we might assume that $1\le b<3$.   When $b=1$,  for $x\ge 2$,
$$\begin{aligned}
\binom{2(k-1)}{k-1}\binom{x+2}{1} &= \frac {(2(k-1))!}{((k-1)!)^2}(x+2) \\ &=\binom{2k}{k} (x+2)\cdot \frac {k^2}{2k(2k-1)} \\ &>\binom{2k}{k}. \end{aligned}
$$

When $b=2$, we have
$$ \frac{\binom{2(k-2)}{k-2}\binom{x+4}{2}}{\binom{2k}{k}} = \frac{(x+4)(x+3)}{2} \frac{k^2 (k-1)^2}{(2k)(2k-1)(2k-2)(2k-3)}.$$
If $x\ge 3$,
$$\begin{aligned}
\frac {(x+4)(x+3)}2\cdot \frac {k^2(k-1)^2}{2k(2k-1)(2k-2)(2k-3)} &=\frac {(x+4)(x+3)}8\cdot \frac {k (k-1) }{  (2k-1) (2k-3)} \\& >\frac {(x+4)(x+3)}{32} \\& >1. \end{aligned}$$
If $x=2$,
$$
\frac {(2+4)(2+3)}2\cdot\frac {k^2(k-1)^2}{2k(2k-1)(2k-2)(2k-3)}=1
$$ has no integer solution. Thus, when $b=2$, for all $x\ge 2$,
$$
\binom{2(k-2)}{k-2}\binom{x+4}{2} \ne \binom{2k}{k}.
$$
\end{proof}

We will now prove our main result when $x=1$.

\begin{proposition}\label{prop2.10March24}
Let $k,a, b$ be positive integers such that $k=a+b$. Then
$$
\binom{2k}{k} = \binom{2a}{a}\binom{1+2b}{b} \quad \text { if and only if } a=1.
$$
\end{proposition}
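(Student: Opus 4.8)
The plan is to settle the ``if'' direction with a one-line binomial identity and to settle the ``only if'' direction by studying the monotonicity of a single quotient, which dispenses with any further case analysis.

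For $a=1$ we have $b=k-1$, so $\binom{2a}{a}\binom{1+2b}{b}=2\binom{2k-1}{k-1}$; since $\binom{2k-1}{k-1}=\binom{2k-1}{k}$, Pascal's rule gives $\binom{2k}{k}=\binom{2k-1}{k-1}+\binom{2k-1}{k}=2\binom{2k-1}{k-1}$, so the equation holds. This is the ``if'' direction, and it also confirms that $a=1$ is genuinely a solution.

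For the ``only if'' direction I would fix $k$ and set $R(a)=\binom{2a}{a}\binom{1+2b}{b}/\binom{2k}{k}$ with $b=k-a$, regarded as a function of the integer $a\in\{1,\dots,k-1\}$; the identity just proved is exactly $R(1)=1$, and the goal becomes to show $R(a)<1$ for $2\le a\le k-1$. The key step is the one-step ratio
$$\frac{R(a+1)}{R(a)}=\frac{\binom{2a+2}{a+1}}{\binom{2a}{a}}\cdot\frac{\binom{2b-1}{b-1}}{\binom{2b+1}{b}}=\frac{2(2a+1)}{a+1}\cdot\frac{b+1}{2(2b+1)}=\frac{(2a+1)(b+1)}{(a+1)(2b+1)},$$
together with the elementary identity $(2a+1)(b+1)-(a+1)(2b+1)=a-b$. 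Hence $R(a+1)>R(a)$ iff $a>b$, i.e. iff $2a>k$, so $R$ is strictly decreasing while $2a<k$ and strictly increasing while $2a>k$. Consequently $R$ is valley-shaped on $\{1,\dots,k-1\}$, and its maximum over $\{2,\dots,k-1\}$ is attained at an endpoint $a=2$ or $a=k-1$. A direct evaluation gives $R(2)=R(k-1)=\tfrac{3k}{2(2k-1)}$, which is $<1$ exactly when $k>2$. Thus for $k\ge 3$ we obtain $R(a)<1$ for all $2\le a\le k-1$, forcing $a=1$; for $k=2$ the only admissible value is already $a=1$.

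The hard part is the exact computation of $R(a+1)/R(a)$ and the observation that comparing it to $1$ collapses to the sign of $a-b$; once this telescoping is in hand, unimodality plus the two boundary evaluations finish everything with no recourse to Lemmas \ref{lemma2.6March24}--\ref{lemma2.9March24}. Alternatively, staying within the paper's running strategy, one could eliminate $a\ge\tfrac12 k,\ b\ge 3$ via Lemma \ref{lemma2.8March24} and treat $b\in\{1,2\}$ and $a\in\{2,3,4\}$ by the same quadratic computations used in Lemmas \ref{lemma2.3March24}--\ref{lemma2.5March24}, but that route still needs a separate treatment of large $a$ with $2a<k$, which is precisely the regime the monotonicity argument handles uniformly.
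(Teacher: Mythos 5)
Your proposal is correct, and it takes a genuinely different---and considerably more elementary---route than the paper. The paper proves the ``only if'' direction by assembling several heavy ingredients: a Maple verification for $k\le 10$ (Lemma \ref{lemma2.2March24}), the Moser-style prime-divisor argument of Lemma \ref{lemma2.8March24} for $a\ge \tfrac12 k$, $b\ge 3$, separate handling of $b=1,2$, and then, for $1<a<\tfrac12 k$, a second application of Hanson's theorem (Lemma \ref{lemma2.6March24}) producing a prime $p$ with $2a\ge p>3a-3$, which forces $a<3$; the residual case $a=2$ is killed by a quadratic computation. Your argument replaces all of this with a single exact computation: I checked that indeed $R(a+1)/R(a)=\frac{(2a+1)(b+1)}{(a+1)(2b+1)}$, that $(2a+1)(b+1)-(a+1)(2b+1)=a-b$, and that $R(2)=R(k-1)=\frac{3k}{2(2k-1)}<1$ precisely when $k>2$; the valley-shape (decreasing for $2a<k$, increasing for $2a>k$, with at most one flat step at $2a=k$, which does not disturb the endpoint-maximum property) then gives $R(a)<1$ on all of $\{2,\dots,k-1\}$, and your separate remark disposes of $k=2$. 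What your approach buys: it is fully self-contained, requires no computer verification and no theorems on prime divisors of products of consecutive integers, and it even yields the sharp quantitative bound $R(a)\le\frac{3k}{2(2k-1)}$ for $2\le a\le k-1$. What the paper's approach buys: the prime-divisor machinery it uses here is the same machinery reused for the $x\ge 2$ cases (Lemmas \ref{lemma2.13March24} and \ref{lemma2.14March24}), so its treatment of $x=1$ is uniform with the rest of the argument, whereas your telescoping identity exploits the special product structure at $x=1$ (the one-step ratio for general $x$ is $\frac{2(2a+1)\,b\,(x+b)}{(a+1)(x+2b)(x+2b-1)}$, whose comparison with $1$ no longer collapses to a sign as cleanly) and so does not generalize to the other cases of the main theorem.
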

\begin{proof}
First, assume $a=1$. Then, 
\begin{align*}
  \frac{\binom{2k}{k}}{\binom{1+2b}{b}} &= \frac{(2k)! \cdot b! \cdot (b+1)!}{(1+2b)! \cdot k! \cdot k!} \\
  &= \frac{(2+2b)(1+2b)\dots (b+2)}{(1+2b)(2b)\dots (b+2)} \cdot \frac{b(b-1)\dots 1}{(b+1)b(b-1) \dots 1} \\
  &= \frac{2(1+b)}{1+b} \\
  &=2 \\
  &= \binom{2a}{a}
\end{align*}

Next, assume $\binom{2k}{k} = \binom{2a}{a}\binom{1+2b}{b}$. By Lemma \ref{lemma2.2March24}, we might assume that $k>10$.   Lemma \ref{lemma2.8March24} implies that, if $a\ge \frac 1 2 k$ and $b\ge 3$, then $
\binom{2k}{k} \ne \binom{2a}{a}\binom{1+2b}{b}$. Moreover, when $b=1, 2$, the  equation
$$
\binom{2k}{k} = \binom{2a}{a}\binom{1+2b}{b}
$$ has no integer solution for $k>10$. Therefore, we might assume that $a<\frac 1 2 k$, so $b>\frac 1 2 k$. For the purpose of the contradiction, we  further assume that $a>1$.

Then
$$\begin{aligned}
\frac {\binom{2k}{k}}{\binom{1+2b}{b}}&=\frac {(2k)(2k-1)\cdots (2b+2)}{(k(k-1)\cdots (b+1))(k(k-1)\cdots (b+2)) } \\& =\frac {(2k)(2k-1)\cdots (2b+3) \cdot 2}{(k(k-1)\cdots (b+2))^2  }  \\&= \binom{2a}{a}\end{aligned}
$$ is an integer. By Lemma \ref{lemma2.6March24}, there exists a prime divisor $p$ of the product $(2k)(2k-1)\cdots (2b+3)$ such that $p>\frac 3 2 (2k-2b-2)=3a -  3  > 2.$ We claim that $p$ is not a divisor of $k(k-1)\cdots (b+2)$. In fact assume that $p$ divides $k-i$ for some $0\le i\le a-1$ and $\alpha$ is the largest positive integer such that $p^\alpha$ divides $k-i$. Then $\alpha$ is also the largest positive integer such that $p^\alpha $ divides $2(k-i)$, and $p$ is not a divisor of other terms $2k-j$, with $j\ne 2i$, in the numerator. In other words, $\alpha$ is the largest positive integer such that $p^\alpha$ divides $(2k)(2k-1)\cdots (2b+3)\cdot 2$. However, $p^{2\alpha}$ divides $(k(k-1)\cdots (b+2))^2$. This contradicts with the assumption that $
\frac {\binom{2k}{k}}{\binom{1+2b}{b}}$ is an integer. Hence $p$ doesn't divide the denominator  $k(k-1)\cdots (b+2)$.

So, we must have $p$ divides $\binom{2a}{a}$, whence $ 2a\ge p>3a -  3  $. So $a<  3  $.  From the  assumption that $a>1$, we have $a=2$. Now that equation $
\binom{2k}{k} = \binom{2a}{a}\binom{1+2b}{b}
$ becomes
$$
\binom{2k}{k} = \binom{4}{2}\binom{2k-3}{k-2}, \qquad \text { when } a=2.
$$Equivalently, we get,
$$
\frac {(2k)(2k-1)(2k-2)}{k\cdot k(k-1)}=6,
$$ which  has no integer solution when $k>10$, a contradiction.

Therefore, $a\le 1$. Since $a$ is a positive integer, it follows that $a=1$. This completes the proof of the result.
\end{proof}

For the next Lemma we will prove, we need the following results from \cite {Shorey1} and \cite {Shorey2}.

\begin{lemma}[Shanta \& Shorey \cite{Shorey1}]\label{lemma2.11March24}
The product of $m$ consecutive integers $n(n+1)\cdots(n+m-1)$  contains a prime divisor greater than $1.8 m$ if  $n>m>2$ and $n+m\ge 150$.
\end{lemma}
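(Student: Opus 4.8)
Since Lemma~\ref{lemma2.11March24} is quoted directly from Shanta and Shorey \cite{Shorey1}, its full proof resides in that reference; what follows is the strategy by which a result of this shape is established, in the elementary counting tradition of Sylvester and Erdos, of which this statement is a quantitative sharpening (Sylvester guarantees only a prime factor exceeding $m$, and here the constant is pushed up to $1.8$). The plan is to argue by contradiction: set $\Delta = n(n+1)\cdots(n+m-1) = m!\,\binom{n+m-1}{m}$ and suppose that every prime factor of $\Delta$ is at most $1.8m$. The aim is to show that, under the hypotheses $n>m>2$ and $n+m\ge 150$, this smoothness assumption forces a numerical inconsistency.

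The central tool is the standard prime-power estimate: for any prime $p$, if $p^{a}$ is the exact power of $p$ dividing $\binom{n+m-1}{m}$, then $p^{a}\le n+m-1$, and a prime $p>m$ meets at most one of the $m$ consecutive factors. I would split the primes into the ranges $p\le m$, $m<p\le\sqrt{n+m-1}$, and $\sqrt{n+m-1}<p\le 1.8m$, noting that primes in the top range occur only to the first power. Combining these with the Chebyshev-type bound $\prod_{p\le x}p<4^{x}$ and explicit estimates for $\pi(x)$ produces an upper bound for $\binom{n+m-1}{m}$ of the form $(n+m-1)^{\pi(1.8m)}$ refined by the smooth-prime product, while a matching lower bound comes from $\binom{n+m-1}{m}\ge\big((n+m-1)/m\big)^{m}$. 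Comparing logarithms, the threshold $n+m\ge 150$ is what should tip the lower bound past the upper bound.

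The genuinely hard part, and the reason I would invoke \cite{Shorey1} rather than reconstruct the argument, is twofold. First, the crude binomial estimate above is decisive only when $n$ is comparable to $m$; when $n$ is large the upper bound $(n+m-1)^{\pi(1.8m)}$ grows and no longer loses to a fixed lower bound, so controlling large $n$ amounts to showing that an interval of length $m$ cannot consist entirely of $1.8m$-smooth integers once $n+m\ge 150$, which requires sharper tools than Chebyshev's inequalities. Second, calibrating the precise constant $1.8$ and the exact threshold $150$ demands explicit, near-optimal prime-counting inequalities in the window $(m,1.8m]$ together with finite verification of the borderline cases. This delicate optimization is exactly the technical content secured by Shanta and Shorey, so I would cite their theorem as stated.
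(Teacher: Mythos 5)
The paper gives no proof of this lemma at all---it is imported verbatim as a known result of Laishram and Shorey \cite{Shorey1}---and your proposal correctly does the same thing: it defers the actual proof to that reference, which is the only legitimate move here. Your accompanying sketch of the Sylvester--Erd\H{o}s smoothness argument (and your honest identification of where the crude Chebyshev-type bounds fail, namely for $n$ large relative to $m$) is accurate but supplementary; the substance of your answer coincides with the paper's treatment.
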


\begin{lemma}[Nair \& Shorey \cite{Shorey2}]\label{lemma2.12March24}
The product of $m$ consecutive integers $n(n+1)\cdots(n+m-1)$  contains a prime divisor greater than $4.42 m$ if  $n>4m$, $m> 3$ and $n+m\ge 150$.
\end{lemma}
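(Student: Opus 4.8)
The statement is the classical Nair--Shorey estimate on the greatest prime factor of a block of consecutive integers, so the plan is to reconstruct the Sylvester--Erd\H os counting argument and then sharpen the constant to $4.42$. Write $\Delta = n(n+1)\cdots(n+m-1)$ and let $P(\Delta)$ denote its greatest prime factor. I would argue by contradiction: suppose $P(\Delta)\le 4.42m$. The goal is to show that under $n>4m$, $m>3$, and $n+m\ge 150$, this smoothness forces $\Delta$ to be simultaneously too large (each of its $m$ factors exceeds $4m$) and too small (it can be assembled only from primes up to $4.42m$), a contradiction.

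The first step is a factorization of the terms. For each $i\in\{0,\ldots,m-1\}$ write $n+i=a_ib_i$, where $a_i$ collects the prime factors $\le m$ and $b_i$ the prime factors in $(m,4.42m]$; under the contradiction hypothesis there are no larger prime factors. Since any two multiples of a prime $p>m$ among $n,\ldots,n+m-1$ differ by less than $p$, each such $p$ divides at most one of the terms. Hence the number of \emph{rough} terms (those with $b_i>1$) is at most $\pi(4.42m)-\pi(m)$, which by an explicit Chebyshev/prime-counting estimate valid once $n+m\ge 150$ is strictly less than $m$. Consequently a large proportion of the $m$ terms are $m$-smooth.

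Next I would bound the smooth contribution. The Legendre-type inequality gives, for each prime $p\le m$, the bound $v_p(\Delta)\le \frac{m}{p-1}+\log_p(n+m-1)$ on the exact power of $p$ dividing $\Delta$, so the product of the $m$-smooth terms, which is $m$-smooth and divides $\Delta$, is at most $\exp\left(\sum_{p\le m}\left(\frac{m}{p-1}+\log_p(n+m-1)\right)\log p\right)$; by Mertens' theorem this is of size $e^{O(m)}(n+m)^{\pi(m)}$. On the other hand each smooth term exceeds $4m$ and there are more than $cm$ of them, so their product exceeds $(4m)^{cm}$. These two bounds on the same quantity, together with the rough contribution $\prod_{m<p\le 4.42m}p^{v_p(\Delta)}$, must be reconciled with $\log\Delta>m\log(4m)$, and it is here that the contradiction is forced.

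The main obstacle is extracting the precise constant $4.42$ rather than merely \emph{some} constant greater than $m$. The crude Sylvester--Erd\H os argument yields only $P(\Delta)>m$; the comparison above has tight margins, since $\pi(m)\log(n+m)$ and $m\log(4m)$ are of comparable order. Pushing the threshold up to $4.42m$ requires the explicit forms of $\theta(x)=\sum_{p\le x}\log p$ and of $\pi(x)$ (Rosser--Schoenfeld-type inequalities), a careful accounting of how many distinct primes in $(m,4.42m]$ a single term can carry when $n$ exceeds $m^2$, and a sharp lower bound on the count of $m$-smooth terms. The hypothesis $n>4m$ is exactly what makes each factor large relative to $m$, while $n+m\ge 150$ licenses the explicit inequalities and reduces the remaining finitely many small configurations to a direct check. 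Optimizing these explicit estimates so that the crossover occurs precisely at $4.42m$ is the delicate, computational heart of the argument.
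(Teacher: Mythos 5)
The paper offers no proof of this lemma at all: it is imported verbatim from Nair and Shorey \cite{Shorey2} and used as a black box, so the only question is whether your reconstruction would actually establish the result, and it would not. The first concrete gap is your counting step: you assert that the number of ``rough'' terms is at most $\pi(4.42m)-\pi(m)$, ``which \ldots is strictly less than $m$,'' and from this you extract a positive proportion of $m$-smooth terms. That inequality fails exactly in the range where the lemma must still hold. For $m=4$ the interval $(m,4.42m]$ contains the five primes $5,7,11,13,17$, which exceeds the number of terms, and for $m=10$ one gets $\pi(44.2)-\pi(10)=14-4=10=m$; in general $\pi(4.42m)<m$ only once $m$ is of size roughly $e^{4.42}\approx 83$. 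The hypothesis $n+m\ge 150$ does nothing to exclude small $m$ (take $m=4$, $n=146$), so the guaranteed supply of smooth terms on which your lower bound $(4m)^{cm}$ rests simply does not exist for small and moderate $m$. Relatedly, your upper bound for the rough contribution is of the shape $(n+m)^{\pi(4.42m)-\pi(m)}$ (each large prime power divides a single term), and since $n$ may be arbitrarily large relative to $m$, the comparison $\log\Delta\approx m\log n$ versus $\pi(4.42m)\log(n+m)+O(m)$ only closes when $\pi(4.42m)$ is comfortably below $m$, i.e.\ again only for large $m$.

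The second, more fundamental gap is that the entire content of the theorem --- raising the threshold from the classical $P(\Delta)>m$, or the $1.8m$ of Lemma \ref{lemma2.11March24}, to $4.42m$ --- is precisely the step you defer as ``the delicate, computational heart of the argument.'' A proof sketch whose acknowledged missing piece is the constant it is supposed to prove is not a proof; the Sylvester--Erd\H{o}s scaffolding you describe is the starting point of Laishram--Shorey and Nair--Shorey, but the published argument is a long paper combining sharp explicit prime-counting inequalities with fine control of multiplicities of primes in $(m,4.42m]$ and extensive finite verification, none of which your outline executes. Within the present paper the correct move is the one the author makes: state the result with attribution to \cite{Shorey2} and use it as a lemma, rather than attempt to reprove it.
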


\begin{lemma}\label{lemma2.13March24}
 Let $k,x,a,b$ be positive integers such that  $k=a+b\ge 75$ and $x\ge 2$. If $a\le 0.9k$ and $1\le b\le 0.8k$, then
  $$
\binom{2k}{k}\ne \binom{2a}{a}\binom{x+2b}{b}.
$$
\end{lemma}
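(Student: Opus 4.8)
The plan is to argue by contradiction using a single well-chosen prime, which is cleaner here than the Moser-type size estimate employed in Lemma~\ref{lemma2.8March24}. Suppose $\binom{2k}{k} = \binom{2a}{a}\binom{x+2b}{b}$. First I would invoke Proposition~\ref{prop2.1March24}: since $x \ge 2$, the case $x \ge a$ would force $x = a = 1$, which is impossible, so we may assume $x < a$; note this gives $x + 2b < a + 2b = 2k - a$. The heart of the argument is then to exhibit a prime $p$ that divides the left-hand side exactly once but divides neither binomial coefficient on the right, contradicting the assumed equality.

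To produce such a prime, I would apply Lemma~\ref{lemma2.11March24} (Shanta \& Shorey) to the product of $k$ consecutive integers $(k+1)(k+2)\cdots(2k)$. Here $n = k+1$ and $m = k$, so $n > m > 2$, and $n + m = 2k + 1 \ge 150$ precisely because $k \ge 75$; the lemma therefore yields a prime divisor $p > 1.8k$. Since $p > 1.8k > k$ and $p$ divides one of $k+1, \dots, 2k$, we have $k < p \le 2k$, and because $2p > 3.6k > 2k$ the only multiple of $p$ in $\{1, \dots, 2k\}$ is $p$ itself; hence $v_p\bigl(\binom{2k}{k}\bigr) = 1$.

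It then remains to check that $p$ divides neither factor on the right. Every prime factor of $\binom{2a}{a}$ is at most $2a$, and the hypothesis $a \le 0.9k$ gives $2a \le 1.8k < p$, so $p \nmid \binom{2a}{a}$. Likewise every prime factor of $\binom{x+2b}{b}$ is at most $x + 2b$, and combining $x < a$ with $b \le 0.8k$ (equivalently $a \ge 0.2k$) gives $x + 2b < 2k - a \le 1.8k < p$, so $p \nmid \binom{x+2b}{b}$. Thus $p$ divides $\binom{2k}{k}$ but not $\binom{2a}{a}\binom{x+2b}{b}$, the desired contradiction.

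The main obstacle is not the divisibility bookkeeping but arranging for the three inequalities $k < p \le 2k$, $p > 2a$, and $p > x + 2b$ to hold at once, and this is exactly where the stated thresholds enter: the bound $p > 1.8k$ must dominate both $2a \le 1.8k$ (forcing $a \le 0.9k$) and $x + 2b < 2k - a \le 1.8k$ (forcing $a \ge 0.2k$, i.e. $b \le 0.8k$), while the reduction $x < a$ from Proposition~\ref{prop2.1March24} is what keeps $x + 2b$ small enough. Note this route uses only Lemma~\ref{lemma2.11March24}, not the Stănică estimate nor Lemma~\ref{lemma2.12March24}; before committing to it I would check whether the weaker Hanson bound suffices, but its $\tfrac{3}{2}k$ cannot beat $2a$ when $a$ approaches $0.9k$, which is precisely what dictates using the sharper $1.8m$ bound here.
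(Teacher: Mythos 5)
Your proposal is correct and follows essentially the same route as the paper: both apply Lemma~\ref{lemma2.11March24} to the product $(k+1)(k+2)\cdots(2k)$ to obtain a prime $p>1.8k$ dividing $\binom{2k}{k}$, use Proposition~\ref{prop2.1March24} to reduce to $x<a$, and then show via the thresholds $2a\le 1.8k$ and $x+2b<k+b\le 1.8k$ that $p$ cannot divide the right-hand side. The only cosmetic difference is that the paper phrases the endgame as deriving $a>0.9k$ or $b>0.8k$ (negating the hypotheses), whereas you directly conclude $p$ divides neither factor; these are the same argument.
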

\begin{proof}Assume that $a\le 0.9k$ and $1\le b\le 0.8k$. Assume, by means of contradiction, that $
\binom{2k}{k}= \binom{2a}{a}\binom{x+2b}{b}.
$
By Lemma \ref{lemma2.11March24}, there exists a prime divisor of the product $(2k)(2k-1)\cdots (k+1)$ such that $p>1.8k$. Obviously, $p$ is also a divisor of $
\binom{2k}{k}$, thus a divisor of $\binom{2a}{a}\binom{x+2b}{b}$. It follows that $2a>1.8k$ or $x+2b>1.8k$. By Proposition \ref{prop2.1March24}, we might assume that $x<a$. Thus, if $x+2b>1.8k$, then $ b=a+2b-(a+b)>x+2b-k>0.8k$. Therefore, we conclude that either $a>0.9k$ or $b>0.8k$, a contradiction to $a\le 0.9k$ and $1\le b\le 0.8k$. Thus, 
$$
\binom{2k}{k}\ne \binom{2a}{a}\binom{x+2b}{b}.
$$
\end{proof}

\begin{lemma}\label{lemma2.14March24}
 Let $k,x,a,b$ be positive integers such that $x\ge 2$ and $k=a+b\ge 150$. If $a\ge \frac {171}{121} x$, then $$
\binom{2k}{k}\ne \binom{2a}{a}\binom{x+2b}{b} .
$$
\end{lemma}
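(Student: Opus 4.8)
The plan is to argue by contradiction, following the prime-divisor strategy of Moser already used in Lemma~\ref{lemma2.8March24} and Proposition~\ref{prop2.10March24}, but extracting the prime from the quotient $\binom{2k}{k}/\binom{x+2b}{b}=\binom{2a}{a}$ using the sharper Nair--Shorey estimate of Lemma~\ref{lemma2.12March24}. Suppose $\binom{2k}{k}=\binom{2a}{a}\binom{x+2b}{b}$. By Proposition~\ref{prop2.1March24} we may assume $x<a$; by Lemma~\ref{lemma2.9March24} we may assume $a<\tfrac12 k$; and since $k\ge 150\ge 75$, Lemma~\ref{lemma2.13March24} lets us assume $b>0.8k$. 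Together these give $x<a$, $a<0.2k$, and in particular $b>4a$. Writing $k=a+b$ and cancelling factorials, the hypothesised equality is equivalent to
$$\binom{2a}{a}=\frac{\prod_{t=x+2b+1}^{2a+2b}t}{\Big(\prod_{i=1}^{x}(b+i)\Big)\Big(\prod_{i=x+1}^{a}(b+i)\Big)^{2}},$$
an identity whose denominator carries the crucial squared block $\big(\prod_{i=x+1}^{a}(b+i)\big)^{2}$.

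Next I would manufacture a large prime in the numerator. Apply Lemma~\ref{lemma2.12March24} to the \emph{tail} product $\prod_{t=2b+2x+1}^{2b+2a}t$, a run of $m:=2(a-x)$ consecutive integers starting at $n:=2b+2x+1$. Its hypotheses hold in the reduced range: $n+m=2k+1\ge 150$; $n>4m$ because $b>4a$ forces $2b>8a>8(a-x)=4m$; and $m>3$ except when $a-x=1$, which under $a\ge\frac{171}{121}x$ forces $x=2,\ a=3$ and is already settled by Lemma~\ref{lemma2.3March24}. Hence there is a prime $p\mid\prod_{t=2b+2x+1}^{2b+2a}t$ with $p>4.42\,m=4.42\cdot 2(a-x)$, and a direct check shows that the hypothesis $a\ge\frac{171}{121}x$ forces $4.42\cdot 2(a-x)>2a$, so $p>2a$. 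Since $p>2a>2(a-x)=m$, the prime $p$ divides exactly one factor $2b+j$ of the tail, with $j\in\{2x+1,\dots,2a\}$; set $\alpha=v_p(2b+j)\ge 1$, and note $p$ divides exactly one factor of the full numerator as well.

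The heart of the matter is then a $p$-adic valuation computation showing $p$ cannot be absorbed by the denominator. As $p>2a$ is odd and exceeds the length $a$ of the interval $[b+1,b+a]$, it divides at most one factor $b+i$ there, and $p\mid b+i$ would force $p\mid(2b+j)-2(b+i)=j-2i$, i.e. $j=2i$. If $j$ is odd this is impossible, so $p$ divides no $b+i$, whence $v_p\binom{2a}{a}=\alpha>0$ and $p\le 2a$, contradicting $p>2a$. If instead $j=2i_0$ is even then $i_0=j/2\ge x+1$, so $b+i_0$ lies in the squared block $\prod_{i=x+1}^{a}(b+i)$, giving $v_p\binom{2a}{a}=\alpha-2\alpha=-\alpha<0$ and contradicting that $\binom{2a}{a}$ is an integer. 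Either way we reach a contradiction, so no solution exists.

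The main obstacle is precisely this denominator bookkeeping: a priori the large numerator prime could be matched by the squared block, so one must (i) split the denominator as above to see exactly which factors are squared, and (ii) restrict the Nair--Shorey application to the tail $\prod_{t=2b+2x+1}^{2b+2a}t$, discarding the first $x$ numerator factors, so as to force the divisible factor $2b+j$ into the range $j\ge 2x+1$, where the even case lands inside the squared block. Securing a prime large enough to survive this is the delicate point — which is why one needs the $4.42m$ bound of Lemma~\ref{lemma2.12March24} rather than the weaker $1.8m$ of Lemma~\ref{lemma2.11March24}, together with the quantitative hypothesis $a\ge\frac{171}{121}x$ to clear the threshold $p>2a$; by contrast, verifying the side conditions $n>4m$ and $m>3$ and sweeping the finitely many small exceptions into Lemmas~\ref{lemma2.3March24}--\ref{lemma2.5March24} is routine.
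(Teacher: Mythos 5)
Your proof is correct, and it rests on the same skeleton as the paper's own argument --- the same reductions via Lemma \ref{lemma2.9March24} and Lemma \ref{lemma2.13March24}, the same factorial identity exposing the squared block $\bigl(\prod_{i=x+1}^{a}(b+i)\bigr)^{2}$, and a $p$-adic valuation contradiction powered by Lemma \ref{lemma2.12March24} --- but you apply the key lemma to a different product, and that changes the endgame. The paper takes $n=b+x+1$, $m=a-x$ and extracts its prime $p>4.42(a-x)$ from the squared block $(b+x+1)\cdots k$ itself; that prime enters the numerator only through the single term $2(k-i)$, with exact exponent $\alpha$, while $p^{2\alpha}$ divides the denominator, so there is just one case and the contradiction is immediate. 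You instead apply Lemma \ref{lemma2.12March24} to the numerator tail $(2b+2x+1)\cdots(2b+2a)$ with $m=2(a-x)$, and must then split on the parity of the hit factor $2b+j$: your even case ($j=2i_0$, $i_0\ge x+1$) is exactly the paper's contradiction run in reverse, while your odd case is new and is the reason you need $p>2a$, to exclude $p\mid\binom{2a}{a}$. The trade is favorable in two respects: doubling $m$ means you only need $a-x\ge 2$, so Lemma \ref{lemma2.3March24} alone sweeps the exceptional case, whereas the paper invokes Lemmas \ref{lemma2.3March24}, \ref{lemma2.4March24}, and \ref{lemma2.5March24} to reach $a-x>3$; and your inequality $8.84(a-x)>2a$ needs only $a>\tfrac{221}{171}x$, so the hypothesis $a\ge\tfrac{171}{121}x$ has slack in your version, while the paper's bound $4.42(a-x)\ge 2a-x$ is exactly tight at $a=\tfrac{171}{121}x$. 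The cost is the extra parity case, which you handle correctly: $p$ is odd, exceeds the spans of both the numerator interval and $[b+1,b+a]$, so it hits at most one factor of each, and the resulting valuation counts ($\alpha$ against $0$ or against $2\alpha$) all check out.
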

\begin{proof}
Assume that $x\ge  2$,   $a\ge \frac {171}{121} x$ and  $k\ge 150$. Assume, by means of contradiction, that $
\binom{2k}{k}= \binom{2a}{a}\binom{x+2b}{b}$ for some $ b\ge 1.$ By Lemma \ref{lemma2.9March24}, we can assume that $a < \frac 1 2k$. By Lemma \ref{lemma2.13March24}, we can assume that $b>0.8k$, thus $a=k-b< 0.2k$. In particular, $b>4a$. By Lemma \ref{lemma2.3March24}, Lemma \ref{lemma2.4March24}, and Lemma \ref{lemma2.5March24} we can assume that $a-x>3$.
So,
$$
 \binom{2a}{a}=\frac {\binom{2k}{k}}{\binom{x+2b}{b}} =\frac {(2k)(2k-1)\cdots (x+2b+1)}{(k \cdot (k-1)\cdots (b+1))(k\cdot (k-1)\cdots (x+b+1))} .
$$ Then
$$
Q=\frac {(2k)(2k-1)\cdots (x+2b+1)}{(k \cdot (k-1)\cdots (x+b+1))^2}  = \binom{2a}{a}\cdot (b+1)\cdots (b+x)
$$ is an integer. Let $n=b+x+1$ and $m=a-x$. Since $k-(x+b)=m> 3 $, we have, $n>b>4a>4m$, and $n+m=k+1 \ge 150$. So, by Lemma \ref{lemma2.12March24}, there exists a prime divisor $p$ of $k \cdot (k-1)\cdots (x+b+1)$ such that $p>4.42(a-x)$. Thus,  for some $0\le i<a-x$, $p$ divides  $k-i$ in the denominator $k\cdot (k-1)\cdots (x+b+1)$ of $Q$. Let $\alpha$ be the largest positive integer such that $p^\alpha$ divides  $k-i$. Then $\alpha$ is also the largest positive integer such that $p^\alpha$ divides $2(k-i)$, and no other term $(2k-t)$, 
for $t\neq 2i$, in the numerator $(2k)(2k-1)\cdots (x+2b+1)$ of $Q$ is divisible by  $p$ as $a\ge \frac {171}{121} x$ and $p>4.42(a-x)\ge (2a-x)$. 
However $p^{2\alpha}$ is a divisor of the denominator $(k \cdot (k-1)\cdots (x+b+1))^2$ of $Q$. That contradicts with the fact that $Q$ is an integer. Therefore, if $x\ge  2$,   $a\ge \frac {171}{121}x$ and  $k\ge 150$, then $$
\binom{2k}{k}\ne \binom{2a}{a}\binom{x+2b}{b} .
$$\end{proof}

Using the previous lemmas, we can prove our main result when $x\geq 2$ and $k \geq 150$.

\begin{proposition}\label{prop2.15March24}
Let $k,x,a,b$ be positive integers such $x\ge 2$ and $k=a+b\ge 150$. Then $$
\binom{2k}{k}\ne \binom{2a}{a}\binom{x+2b}{b}.
$$
\end{proposition}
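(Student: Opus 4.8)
The plan is to assume, for contradiction, that $\binom{2k}{k} = \binom{2a}{a}\binom{x+2b}{b}$ holds with $x \ge 2$ and $k = a+b \ge 150$, and then to peel off successive regions of the parameter space using the lemmas already proved until only one narrow band survives. First, Proposition \ref{prop2.1March24} disposes of $x \ge a$: since $x \ge 2$ rules out the equality case $x=a=1$, it would give $\binom{2k}{k} < \binom{2a}{a}\binom{x+2b}{b}$, contradicting the assumed equation, so I may assume $x < a$ (hence $a \ge 3$). Next, Lemma \ref{lemma2.9March24} eliminates $a \ge \tfrac12 k$, so I may assume $a < \tfrac12 k$, i.e. $b > \tfrac12 k$. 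Since $k \ge 150 \ge 75$, $x \ge 2$, $a < \tfrac12 k \le 0.9k$, and $b \ge 1$, Lemma \ref{lemma2.13March24} eliminates $b \le 0.8k$; thus I may assume $b > 0.8k$, which forces $a < 0.2k$ and in particular $b > 4a$.

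We are now in the regime $x < a < 0.2k$ with $b > 4a$. Because $a \ge 3$ and $2a < 0.4k < k$, the hypothesis $4 < 2a \le k$ of Lemmas \ref{lemma2.3March24}, \ref{lemma2.4March24}, and \ref{lemma2.5March24} is met; taking $j = a-x$ in $\binom{2a}{a}\binom{a+2b-j}{b}$, these three lemmas show $a - x \ne 1,2,3$, so $a - x \ge 4$. Finally, Lemma \ref{lemma2.14March24} (applicable since $x \ge 2$ and $k \ge 150$) eliminates $a \ge \tfrac{171}{121}x$. After all of these reductions there remains only the band
\[ x < a < \tfrac{171}{121}x, \qquad a - x \ge 4, \qquad a < 0.2k, \qquad b > 0.8k, \]
which in particular forces $x \ge 10$ and $a \ge 14$.

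To close this residual band I would return to the integer
\[ Q = \frac{(2k)(2k-1)\cdots(x+2b+1)}{\big(k(k-1)\cdots(x+b+1)\big)^2} = \binom{2a}{a}(b+1)\cdots(b+x) \]
from the proof of Lemma \ref{lemma2.14March24}. Here the denominator is a block of $a-x$ consecutive integers lying in $(0.8k,\,k]$, and the numerator is a block of $2a-x$ consecutive integers lying in $(1.6k,\,2k]$. The valuation argument succeeds as soon as one locates a prime $p$ dividing the denominator block with $p \ge 2a-x$: then $p$ meets at most one of the $2a-x$ numerator terms (their spread being $2a-x-1 < p$), so the $p$-adic valuation of the numerator equals that of the denominator, while $p$ appears to twice that power in the \emph{squared} denominator, contradicting the integrality of $Q$. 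Nair and Shorey (Lemma \ref{lemma2.12March24}) guarantee a prime factor exceeding $4.42(a-x)$, and the inequality $4.42(a-x) \ge 2a-x$ is precisely the condition $a \ge \tfrac{171}{121}x$ already handled. Thus the main obstacle is exactly this band $1 < a/x < \tfrac{171}{121}$, where the strongest cited estimate is just barely too weak to force $p \ge 2a-x$.

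To overcome it, the cleanest route I would pursue is to show directly that the top block $(x+2b,\,2k]$, of length $2a-x \ge 18$ and sitting just below $2k$, contains a prime $p$. Such a $p$ satisfies $p > x+2b > 1.6k > k > 2a$, so it divides $\binom{2k}{k}$ exactly once while dividing neither $\binom{2a}{a}$ (as $p > 2a$) nor $\binom{x+2b}{b}$ (as $p > x+2b$), an immediate contradiction. The delicate point is guaranteeing a prime in this short block for \emph{every} admissible $(a,x,k)$ with $k$ arbitrarily large; this is where a sharpening of the consecutive-integer prime estimates, or an argument tailored to the narrow ratio $a/x$, will be required, and I expect it to be the crux of the whole proposition.
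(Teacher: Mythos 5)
Your chain of reductions is sound and tracks the paper closely: Proposition \ref{prop2.1March24} to get $x<a$, Lemma \ref{lemma2.13March24} to get $b>0.8k$ (hence $a<0.2k$), and Lemma \ref{lemma2.14March24} to get $a<\frac{171}{121}x$; your observation that $4.42(a-x)\ge 2a-x$ is exactly the condition $a\ge\frac{171}{121}x$ is also correct. But everything after that is a genuine gap, and you say so yourself: the residual band $x<a<\frac{171}{121}x$, $b>0.8k$ is the actual content of this proposition, and your proposed way of closing it cannot work. You want a prime in the block $(x+2b,\,2k]$, which contains exactly $2a-x$ integers. In the band, $a$ and $x$ need not grow with $k$ (e.g.\ $x=10$, $a=14$ is admissible for every $k\ge 150$), so this is a block of \emph{bounded} length (as small as $18$) sitting just below $2k$ with $k$ arbitrarily large. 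Since prime gaps are unbounded (all of $N!+2,\dots,N!+N$ are composite), for infinitely many $k$ no such prime exists, so no sharpening of the Hanson/Laishram--Shorey/Nair--Shorey type results can deliver what you need; your contradiction is supposed to flow from the mere existence of that prime, independent of the assumed equation, and that existence is simply false in general.

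The paper closes the band by a completely different, analytic argument, and this is the idea you are missing. Write $\binom{x+2b}{b}=\binom{2b}{b}\cdot\frac{(2b+1)\cdots(2b+x)}{(b+1)\cdots(b+x)}$. Using St\u anic\u a's bounds (Lemma \ref{lemma2.7March24}) together with $a<\frac{171}{121}x<\frac32 x$ and $a<0.2k$, one gets
\begin{equation*}
\frac{\binom{2k}{k}}{\binom{2a}{a}\binom{2b}{b}} \le e^{\frac1{8a}+\frac1{8b}}\sqrt{\frac{\pi ab}{k}} < e^{1/4}\sqrt{\frac{3\pi x}{2}},
\qquad\text{while}\qquad
\frac{(2b+1)\cdots(2b+x)}{(b+1)\cdots(b+x)} > \Bigl(2-\frac{a}{b+a}\Bigr)^x \ge \Bigl(\frac95\Bigr)^x .
\end{equation*}
Since $\bigl(\frac95\bigr)^x > e^{1/4}\sqrt{3\pi x/2}$ for all $x\ge 3$, the right-hand side of the assumed equation strictly exceeds $\binom{2k}{k}$, a contradiction; and the leftover case $x=2$ dies immediately because $x<a<\frac32 x$ would force $2<a<3$, impossible for an integer. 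So where you look for one large prime in a short interval, the paper trades the prime-divisor machinery for a size comparison: in the narrow ratio band the factor $\frac{(2b+1)\cdots(2b+x)}{(b+1)\cdots(b+x)}$ is already so close to $2^x$ that the equation is impossible on purely quantitative grounds. Your valuation-theoretic framing is fine for Lemma \ref{lemma2.14March24}, but it cannot be pushed into this last band.
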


\begin{proof}
Assume, by means of contradiction, that $\binom{2k}{k}= \binom{2a}{a}\binom{x+2b}{b}$ for some $b\ge 1.$ By Proposition \ref{prop2.1March24} and Lemma \ref{lemma2.14March24}, we might assume that $x<a<\frac{171}{121} x<\frac 3 2 x$. By Lemma \ref{lemma2.13March24}, we assume that $b>0.8k$, which gives us $a < 0.2k$. Note that $$
\binom{x+2b}{b} =\frac {(x+2b)!}{b! \cdot (x+b)!} = \frac {(2b)!}{b!\cdot b!}\cdot \frac {(2b+1)(2b+2)\cdots (2b+x)}{(b+1)(b+2)\cdots (b+x)}.
$$
By Lemma \ref{lemma2.7March24} and the assumption that $a<\frac  3 2 x$,
$$\begin{aligned}
\frac {\binom{2k}{k}}{\binom{2a}{a}\binom{2b}{b}} & \le \frac {\frac 1 {\sqrt{2\pi }}k^{-\frac 1 2}2^{2k+\frac 1 2}}{
\frac 1 {\sqrt{2\pi }}e^{-\frac 1 {8a}}a^{-\frac 1 2}2^{2a+\frac 1 2}\cdot \frac 1 {\sqrt{2\pi }}e^{-\frac 1{8b}}b^{-\frac 1 2}2^{2b+\frac 1 2}  } \\
&= e^{\frac 1 {8a}+\frac 1 {8b}}\sqrt{\frac {\pi ab} k} \\
&< e^{1/4}\sqrt{\frac{3\pi x}{2} }.
\end{aligned}$$
On the other hand,
$$\begin{aligned}
\frac {(2b+1)(2b+2)\cdots (2b+x)}{(b+1)(b+2)\cdots (b+x)} & = \Big(2-\frac 1 {b+1}\Big) \Big(2- \frac 2 {b+2} \Big) \cdots \Big( 2- \frac x {b+x} \Big) \\
&> \Big( 2- \frac x {b+x} \Big)^x > \Big( 2- \frac a {b+a} \Big)^x \ge \Big( 2-  \frac 1 5  \Big)^x.  \end{aligned}
$$Note that $  \big( \frac 9 5  \big)^x -  e^{1/4}\sqrt{3\pi x/2 }$ is an increasing function for $x\ge 2$ and
$  \big( \frac 9 5  \big)^3 > e^{1/4}\sqrt{9\pi  /2 }$. So, for $x\ge 3$ and $k\ge 150$,
$$
\frac {(2b+1)(2b+2)\cdots (2b+x)}{(b+1)(b+2)\cdots (b+x)} > \Big(\frac{9}{5} \Big)^x > e^{1/4}\sqrt{\frac{3\pi x}{2}} > \frac {\binom{2k}{k}}{\binom{2a}{a}\binom{2b}{b}}
$$
which gives us
$$
\binom{x+2b}{b} = \binom{2b}{b} \cdot \frac {(2b+1)(2b+2)\cdots (2b+x)}{(b+1)(b+2)\cdots (b+x)} > \frac {\binom{2k}{k}}{\binom{2a}{a}}
$$
and therefore,
$$
\binom{2a}{a}\binom{x+2b}{b} > \binom{2k}{k}
$$
which is a contradiction to our assumption.
Also, we know that $x<a<\frac{3}{2}x$, so when $x=2$, we must have $2<a<3$, a contradiction to $a$ an integer. Therefore, for $x\geq 2$ and $k \geq 150$,
 $$
\binom{2k}{k}\ne \binom{2a}{a}\binom{x+2b}{b}.
$$
\end{proof}

Finally, we will prove our main result when $x\geq 2$ and $x<k< 150$.

\begin{proposition}\label{prop2.16March24}
Let $k,x,a,b$ be positive integers such $x\ge 2$, $k=a+b$, and $x< k< 150$. Then 
$$
\binom{2k}{k}\ne \binom{2a}{a}\binom{x+2b}{b}.
$$
\end{proposition}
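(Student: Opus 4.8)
The plan is to reduce the statement to a finite verification, since the asymptotic prime-divisor results of Shanta--Shorey and Nair--Shorey (Lemmas~\ref{lemma2.11March24} and~\ref{lemma2.12March24}) all require $n+m\ge 150$, i.e.\ $k\ge 150$, and therefore give no leverage in the range $k<150$. First I would invoke the elementary reductions already established. By Proposition~\ref{prop2.1March24}, since $x\ge 2$, an equality with $x\ge a$ would force $x=a=1$, which is impossible; hence I may assume $x<a$, and in particular $a\ge x+1\ge 3$. By Lemma~\ref{lemma2.2March24} I may assume $k>10$, so that $11\le k\le 149$. By Lemma~\ref{lemma2.9March24}, applied with $x\ge 2$, I may assume $a<\tfrac12 k$; consequently $b=k-a>\tfrac12 k$ and $2a<k$.

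Next I would use Lemmas~\ref{lemma2.3March24},~\ref{lemma2.4March24}, and~\ref{lemma2.5March24} to control the gap $a-x$. Because $a\ge 3$ gives $2a>4$ and $a<\tfrac12 k$ gives $2a<k$, the hypothesis $4<2a\le k$ is satisfied, so these three lemmas rule out $a-x\in\{1,2,3\}$. Thus I may further assume $a-x\ge 4$, whence $x\le a-4$ and $a\ge x+4\ge 6$. At this stage every surviving triple $(k,a,x)$ lies in the finite box
\[
11\le k\le 149,\qquad 6\le a\le \lfloor (k-1)/2\rfloor,\qquad 2\le x\le a-4,
\]
with $b=k-a$ determined. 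If desired, for $75\le k\le 149$ one could additionally apply Lemma~\ref{lemma2.13March24}, all of whose other hypotheses ($x\ge 2$, $a<\tfrac12 k\le 0.9k$, $b\ge 1$) already hold, to discard every triple with $b\le 0.8k$, leaving only the thin sub-range $b>0.8k$ there; but this is merely an optimization of the workload.

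Finally, exactly as in the proof of Lemma~\ref{lemma2.2March24}, I would verify by a direct computation in Maple that $\binom{2k}{k}\ne\binom{2a}{a}\binom{x+2b}{b}$ for every triple in this finite box, which completes the proof. (Note that the box is empty unless $\lfloor (k-1)/2\rfloor\ge 6$, so the effective range is $13\le k\le 149$.)

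The main obstacle I anticipate is not any single calculation but the absence of an analytic endgame: the results that close the case $k\ge 150$ (Proposition~\ref{prop2.15March24} and Lemma~\ref{lemma2.14March24}) are unavailable here precisely because their number-theoretic inputs demand $n+m\ge 150$. One is therefore forced onto the finite check, and the real care lies in confirming that the reductions above genuinely tile the entire parameter region---in particular handling the boundary value $a-x=4$ and the strict constraint $b>\tfrac12 k$ correctly---so that the Maple verification is exhaustive rather than leaving an unnoticed gap.
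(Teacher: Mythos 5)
Your proposal is correct, and its endgame is exactly the paper's entire proof: the paper disposes of this proposition with a single brute-force Maple verification over the whole region $x\ge 2$, $x<k<150$, with none of your preliminary reductions. Your use of Proposition~\ref{prop2.1March24}, Lemma~\ref{lemma2.2March24}, Lemma~\ref{lemma2.9March24}, and Lemmas~\ref{lemma2.3March24}--\ref{lemma2.5March24} (optionally Lemma~\ref{lemma2.13March24}) to shrink the search box is sound and correctly tiles the parameter region, but it amounts to an optimization of the same computational approach rather than a genuinely different proof.
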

\begin{proof}
Using Maple, we verified that, for $x\ge 2$ and $x< k <  150$,
$$
\binom{2k}{k}\ne \binom{2a}{a}\binom{x+2k}{k}.
$$
\end{proof}

Now, we can prove the following, main theorem of the paper.

{
\renewcommand{\thetheorem}{\ref{Thm1.1}}
\begin{theorem}
  Let $k,x, b$ be positive integers and $a$ be a nonnegative integer such that \\
\indent $k=a+b$. Then
$$
\binom{2k}{k}= \binom{2a}{a}\binom{x+2b}{b}, \qquad \text { if and only if  } x=a=1.
$$
\end{theorem}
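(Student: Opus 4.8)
The plan is to assemble the previously established results, since each region of the parameter space has already been settled; the only remaining work is a clean case split together with a check that the hypotheses of the cited statements are actually met.

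The \emph{if} direction is immediate: when $x=a=1$, the final line of the computation in Proposition~\ref{prop2.1March24} (equivalently, the first half of Proposition~\ref{prop2.10March24} specialized to $a=1$) gives $\binom{2a}{a}\binom{x+2b}{b}=\binom{2}{1}\binom{1+2b}{b}=\binom{2k}{k}$.

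For the \emph{only if} direction, I would assume $\binom{2k}{k}=\binom{2a}{a}\binom{x+2b}{b}$ and split on whether $x\ge a$. If $x\ge a$, then Proposition~\ref{prop2.1March24} asserts $\binom{2k}{k}\le\binom{2a}{a}\binom{x+2b}{b}$ with equality precisely when $x=a=1$; since equality is assumed, we conclude $x=a=1$. This case also absorbs the possibility $a=0$, for which the positivity of $x$ forces $x>a$ and hence strict inequality, so equality cannot hold. If instead $x<a$, then $a\ge 2$, and since $b\ge 1$ forces $a<k$ we obtain $x<a<k$, so in particular $x<k$. Here the equation must be impossible, contradicting the assumption: when $x=1$, Proposition~\ref{prop2.10March24} forces $a=1$, contradicting $a\ge 2$; when $x\ge 2$, either $k\ge 150$, in which case Proposition~\ref{prop2.15March24} excludes a solution, or $k<150$, in which case $x<k<150$ and Proposition~\ref{prop2.16March24} excludes a solution. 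Thus $x<a$ cannot occur, and the equation holds only for $x=a=1$.

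The main, and essentially only, obstacle at this stage is bookkeeping: confirming that the split into $\{x\ge a\}$ and $\{x<a\}$ (with the latter refined by $x=1$ versus $x\ge 2$, and by $k\ge 150$ versus $k<150$) is exhaustive, and that each invoked result applies as stated. The single point requiring care is that $x<a$ together with $b\ge 1$ yields $x<k$, which is exactly the hypothesis $x<k<150$ demanded by Proposition~\ref{prop2.16March24}; the remaining fits are routine. No new estimate is introduced, since every range has already been handled by the preceding lemmas and propositions.
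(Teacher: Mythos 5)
Your proposal is correct and takes essentially the same route as the paper: both proofs assemble Propositions \ref{prop2.1March24}, \ref{prop2.10March24}, \ref{prop2.15March24}, and \ref{prop2.16March24} through an exhaustive case split, differing only in that you split first on $x\ge a$ versus $x<a$ while the paper splits first on $x=1$ versus $x\ge 2$. If anything, your bookkeeping is slightly more careful than the paper's, since you explicitly dispose of the case $a=0$ (which Proposition \ref{prop2.10March24} does not cover, as it requires $a$ positive) and you verify the hypothesis $x<k$ before invoking Proposition \ref{prop2.16March24}.
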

\addtocounter{theorem}{-1}
}

\begin{proof}
  When $x=1$, we obtain the result from Proposition \ref{prop2.10March24}.

When $x\geq 2$, and $k\geq 150$, we obtain the result from Proposition \ref{prop2.15March24}.

When $x\geq 2$, and $x<k\leq 150$, we obtain the result from Proposition \ref{prop2.16March24}.

When $x\geq 2$, and $x\geq k$, we obtain the result from Proposition \ref{prop2.1March24}.
\end{proof}

\end{document}